\newlist{enumth}{enumerate}{1}
\setlist[enumth]{label=\emph{(\arabic*)}, ref=\thetheorem(\arabic*)}
\crefname{section}{\S}{\SS}
\crefname{figure}{Figure}{Figures}
\DeclareMathSymbol{A}{\mathalpha}{operators}{`A}%
\DeclareMathSymbol{B}{\mathalpha}{operators}{`B}%
\DeclareMathSymbol{C}{\mathalpha}{operators}{`C}%
\DeclareMathSymbol{D}{\mathalpha}{operators}{`D}%
\DeclareMathSymbol{E}{\mathalpha}{operators}{`E}%
\DeclareMathSymbol{F}{\mathalpha}{operators}{`F}%
\DeclareMathSymbol{G}{\mathalpha}{operators}{`G}%
\DeclareMathSymbol{H}{\mathalpha}{operators}{`H}%
\DeclareMathSymbol{I}{\mathalpha}{operators}{`I}%
\DeclareMathSymbol{J}{\mathalpha}{operators}{`J}%
\DeclareMathSymbol{K}{\mathalpha}{operators}{`K}%
\DeclareMathSymbol{L}{\mathalpha}{operators}{`L}%
\DeclareMathSymbol{M}{\mathalpha}{operators}{`M}%
\DeclareMathSymbol{N}{\mathalpha}{operators}{`N}%
\DeclareMathSymbol{O}{\mathalpha}{operators}{`O}%
\DeclareMathSymbol{P}{\mathalpha}{operators}{`P}%
\DeclareMathSymbol{Q}{\mathalpha}{operators}{`Q}%
\DeclareMathSymbol{R}{\mathalpha}{operators}{`R}%
\DeclareMathSymbol{S}{\mathalpha}{operators}{`S}%
\DeclareMathSymbol{T}{\mathalpha}{operators}{`T}%
\DeclareMathSymbol{U}{\mathalpha}{operators}{`U}%
\DeclareMathSymbol{V}{\mathalpha}{operators}{`V}%
\DeclareMathSymbol{W}{\mathalpha}{operators}{`W}%
\DeclareMathSymbol{X}{\mathalpha}{operators}{`X}%
\DeclareMathSymbol{Y}{\mathalpha}{operators}{`Y}%
\DeclareMathSymbol{Z}{\mathalpha}{operators}{`Z}%
\renewcommand{\leq}{\leqslant}
\renewcommand{\geq}{\geqslant}
\numberwithin{equation}{section}
\renewcommand{\mathcal}{\mathscr}
\newcommand{\Cc}{\mathbf{C}}
\newcommand{\Zz}{\mathbf{Z}}
\newcommand{\Rr}{\mathbf{R}}
\newcommand{\Qq}{\mathbf{Q}}
\newcommand{\Ff}{\mathbf{F}}
\newcommand{\expect}{\mathbf{E}}
\def\loccit{loc.\kern3pt cit.{}\xspace}
\def\cf{see\kern.3em}
\def\Cf{See\kern.3em}
\def\eg{e.g.\kern.3em}
\def\resp{\text{resp.}\kern.3em}
\newcommand{\mods}[1]{\,(\mathrm{mod}\,{#1})}
\DeclareMathOperator{\Tr}{Tr}
\newcommand{\eps}{\varepsilon}
\renewcommand{\rho}{\varrho}
\DeclareMathOperator{\SL}{\mathbf{SL}}
\DeclareMathOperator{\GL}{\mathbf{GL}}
\DeclareMathOperator{\PGL}{\mathbf{PGL}}
\DeclareMathOperator{\SO}{\mathbf{SO}}
\DeclareMathOperator{\SU}{\mathbf{SU}}
\DeclareMathOperator{\Un}{\mathbf{U}}
\newcommand{\demi}{{\textstyle{\frac{1}{2}}}}
\DeclareMathSymbol{\gena}{\mathord}{letters}{"3C}
\DeclareMathSymbol{\genb}{\mathord}{letters}{"3E}
\theoremstyle{plain}
\newtheorem{theorem}{Theorem}[section]
\newtheorem*{theorem*}{Theorem}
\newtheorem{lemma}[theorem]{Lemma}
\newtheorem{proposition}[theorem]{Proposition}
\theoremstyle{remark}
\theoremstyle{definition}
\newtheorem{remark}[theorem]{Remark}
\newcommand{\abs}[1]{\left\lvert#1\right\rvert}
\newcommand{\mcF}{\mathscr{F}}
\renewcommand{\geq}{\geqslant}
\renewcommand{\leq}{\leqslant}
\begin{document}

\title{Spectrally indistinguishable pseudorandom graphs}

\author{Arthur Forey}
\address[A. Forey]{Univ. Lille, CNRS, UMR 8524 - Laboratoire Paul Painlevé,  \newline F-59000 Lille, France} 
  \email{arthur.forey@univ-lille.fr}

\author{Javier Fresán}
\address[J. Fres\'an]{Sorbonne Université and Université Paris Cité, CNRS, IMJ-PRG, \newline F-75005 Paris, France}
\email{javier.fresan@imj-prg.fr}

\author{Emmanuel Kowalski}
\address[E. Kowalski]{D-MATH, ETH Zürich, Rämistrasse 101, 8092 Zürich, Switzerland} 
\email{kowalski@math.ethz.ch}

\author{Yuval Wigderson}
\address[Y. Wigderson]{ETH-ITS, Scheuchzerstrasse 70, 8006 Zürich, Switzerland} 
\email{yuval.wigderson@eth-its.ethz.ch}

\subjclass[2020]{11T23, 05C35, 05C48}

\begin{abstract}
  We construct explicit families of graphs whose eigenvalues are
  asymptotically distributed according to Wigner's semicircle law; in
  other words, that are spectrally indistinguishable from random
  graphs. However, in other respects they are strikingly dissimilar from
  random graphs; for example, they are $K_{2,3}$-free graphs with almost
  the maximum possible edge density.
\end{abstract}

\maketitle 

\section{Introduction}

One of the most important developments in the last half-century of
combinatorics and many related fields has been the notion of
\emph{pseudorandomness}. Loosely, one says that a discrete object
(say, a graph) is pseudorandom if it satisfies some
\emph{deterministic} property that is also shared, with high
probability, by a randomly chosen object.

In graph theory, the best-known and most studied notion\footnote{For
  dense graphs, the seminal works of Thomason~\cite{thomason} and
  Chung--Graham--Wilson~\cite{cgw} imply that essentially all
  notions of pseudorandomness are roughly equivalent. However, this is
  known to be false for sparse graphs (see, e.g., the paper~\cite{sstz}
  of Sah, Sawhney, Tidor and Zhao and the discussion therein).} of
pseudorandomness is often called \emph{spectral
  pseudorandomness}. Loosely speaking, one says that a graph is
spectrally pseudorandom if all of its non-trivial
eigenvalues\footnote{As usual, when referring to the eigenvalues of a
  graph, we mean the eigenvalues of its adjacency matrix.} are much
smaller in absolute value than its average degree. The 
\emph{expander mixing lemma} of Alon and Chung \cite{ac} states that such a condition
implies a fairly uniform distribution of edges among all large sets in
the graph, as one would expect in a random graph.  Of particular
importance are \emph{(near-)Ramanujan graphs}, where all the non-trivial
eigenvalues are at most roughly the square root of the degree, as these
give essentially optimal control on the distribution of edges.  For an
in-depth introduction to spectrally pseudorandom graphs, we refer the reader to the
excellent survey~\cite{ks} of Krivelevich and Sudakov.

Our focus in this paper is on the construction of explicit graphs that
are ``even more spectrally pseudorandom''. More precisely, Wigner's semicircle law~\cite{wigner} implies that, upon appropriate
scaling, the spectrum of an Erd\H os--Rényi random graph at any\footnote{To
  be exact, one must impose the weak (and necessary) assumption that
  $pn$ and $(1-p)n$ both tend to infinity, {where $n$ is the number of vertices and $p=p(n)$ the edge probability.}} edge density converges to
the semicircle distribution (see, e.g., the paper~\cite{tvw}*{\S1.2} of
Tran, Vu and Wang for the precise statement). Moreover, as is common in
probability, Wigner's result exhibits universality, and it is now known
that many other models of random graphs, such as random regular
graphs and power-law graphs, also exhibit a semicircular spectrum; see the \hbox{papers~\cites{clv,tvw}} of Chung--Lu--Vu and Tran--Vu--Wang for details.

By contrast, the explicit pseudorandom graphs that we are aware of (such
as those in~\hbox{\cite{ks}*{\S3})} have a spectral distribution that is
very far from semicircular. In fact, these constructions have a very
``spiky'' spectrum, where a small number of eigenvalues appear with
extremely large multiplicity. As such, the spectral distribution does
not converge to \emph{any} absolutely continuous distribution, but
rather to a finitely-supported atomic measure. For example, Paley graphs
and many graphs arising from finite geometries are \emph{strongly
  regular}, implying that they only have two non-trivial eigenvalues,
both of which occur with multiplicity linear in the number of vertices.

Our main result in this paper is an explicit construction of graphs that turn out not only to be spectrally pseudorandom in the strongest
possible sense (they are nearly Ramanujan graphs), but also to have the property that their empirical spectral distribution
converges to the semicircle distribution. However, in other respects,
they are highly unlike random graphs, as exhibited by the avoidance of
certain small subgraphs.

\begin{theorem}\label{th-main}
  Let~$k$ be a finite field. Set\footnote{\ The letters $K$ and $B$
    stand for ``Kloosterman'' and ``Birch'' respectively, for reasons
    that will be clear in the course of the proof in
    Section~\ref{sec:proof}.}
  \[
    K(k)=\{(x,y)\in k\times k\,\mid\, xy=1\} \quad\text{and}\quad
    B(k)=\{(x,y)\in k\times k\,\mid\, y=x^3\}.
  \]

  Define graphs $\Gamma_K(k)$ and~$\Gamma_B(k)$ with vertex set
  $k\times k$ in both cases and with edges joining~$x$ and~$y$ if and
  only if $x+y\in K(k)$ or $x+y\in B(k)$, respectively. These are
  regular graphs of degree~$|k|-1$ and $|k|$, respectively, and satisfy
  the following additional properties:

  \begin{enumth}
  \item The complete bipartite graph $K_{2,3}=\tikz[every node/.style={fill, circle, inner sep=1.2pt},xscale=.35, yscale=.25,baseline=2.1ex]{\foreach \i in {1,2} \node (a\i) at (0,\i+.5) {}; \foreach \j in {1,2,3} \node (b\j) at (1,\j) {}; \foreach \i in {1,2} {\foreach \j in {1,2,3} \draw (a\i) -- (b\j);}}$ is not a
    subgraph of $\Gamma_K(k)$.

  \item All the non-trivial eigenvalues~$\lambda$ of the adjacency
    matrix of~$\Gamma_K(k)$ satisfy
    \[
      |\lambda|\leq 2|k|^{1/2}.
    \]
    
  \item As the size of~$k$ tends to infinity, the numbers
    \[
      \Bigl\{\frac{\lambda}{\sqrt{|K(k)|}}\,\mid\, \lambda\text{ an
        eigenvalue of }\Gamma_K(k)\Bigr\}
    \]
    converge in distribution to the semicircle distribution
    \begin{equation}\label{eq-musc}
      \mu_{\mathrm{sc}} = \frac{1}{\pi} \sqrt{1- \frac{x^2}{4}}dx
    \end{equation}
    on $[-2,2]$. In other words, for all $-2 \leq a <b \leq 2$, we have
    \[
      \frac{1}{|k|^2} \Bigl|\Bigl\{\lambda \text{ an eigenvalue of
      }\Gamma_K(k) \,\mid\, \frac{\lambda}{\sqrt{|K(k)|}} \in
      [a,b]\Bigr\}\Bigr| \to \frac{1}{\pi} \int_a^b  \sqrt{1-
        \frac{x^2}{4}}dx
    \]
    as $|k|\to+\infty$. In fact, we have the estimate
    \begin{equation}\label{eq-w1}
      W_1\Bigl( \frac{1}{|k|^2-1}
      \sum_{\lambda\not=|k|-1}\delta_{\lambda/\sqrt{|K(k)|}},
      \mu_{\mathrm{sc}}\Bigr)
      =O(|k|^{-1/3}),
    \end{equation}
    where~$\delta_t$ denotes a Dirac mass at~$t$ and~$W_1$ denotes the
    Wasserstein distance for probability measures and the sum is over
    the non-trivial eigenvalues of the graph~$\Gamma_K(k)$.
  \item The same properties hold for the graphs~$\Gamma_B(k)$,
    provided~$k$ has characteristic~$\geq 7$.
  \end{enumth}
\end{theorem}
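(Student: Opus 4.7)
The plan is to recognize that $\Gamma_K(k)$ and $\Gamma_B(k)$ are, up to the mild asymmetry between $x+y$ and $x-y$, Cayley graphs on the additive group $k \times k$, so their spectra can be computed by Fourier analysis on this group. For a fixed nontrivial additive character $\psi\colon k \to \Cc^\times$, set $\psi_{a,b}(x,y) = \psi(ax+by)$. A direct computation shows that the adjacency matrix of $\Gamma_K(k)$ sends $\psi_{a,b}$ to $\hat K(a,b)\,\psi_{-a,-b}$, where
\[
\hat K(a,b) = \sum_{(x,y)\in K(k)}\psi(ax+by) = \sum_{x\in k^\times}\psi\!\left(ax + \tfrac{b}{x}\right)
\]
is a two-variable Kloosterman sum. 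Since $K(k) = -K(k)$, one has $\hat K(-a,-b)=\hat K(a,b)$, so diagonalizing on each two-dimensional block $\Cc\psi_{a,b}\oplus\Cc\psi_{-a,-b}$ shows that the non-trivial eigenvalues of $\Gamma_K(k)$ are of the form $\pm\hat K(a,b)$ for non-zero $(a,b)\in k\times k$, while $\hat K(0,0)=|K(k)|$ is the trivial eigenvalue. An analogous analysis applies to $\Gamma_B(k)$ with Birch sums $\sum_{x\in k}\psi(ax+bx^3)$ in place of Kloosterman sums.

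Granting this spectral description, parts (1) and (2) are brief. For (1), any embedding of $K_{2,3}$ in $\Gamma_K(k)$ with parts $\{u_1,u_2\}$ and $\{v_1,v_2,v_3\}$ would force each $v_j$ to lie in $(H-u_1)\cap(H-u_2)$, where $H\colon xy=1$; subtracting the two quadratic equations eliminates the $xy$ term and produces a line, which meets either translate in at most two points, contradicting three distinct $v_j$'s. Part (2) follows from the Weil bound $|\hat K(a,b)|\leq 2\sqrt{|k|}$ for $ab\neq 0$, together with the elementary evaluation $\hat K(a,b)=-1$ when exactly one of $a,b$ vanishes.

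The main content is (3). The essential reduction is the identity $\hat K(a,b) = \Kl_2(ab;k)$ for $ab\neq 0$, obtained by the substitution $x\mapsto x/a$, where $\Kl_2(c;k) = \sum_{x\in k^\times}\psi(x+c/x)$ is the classical Kloosterman sum. Consequently, once the $O(|k|)$ non-trivial eigenvalues coming from pairs with $ab=0$ are absorbed into a negligible error, the empirical measure of normalized non-trivial eigenvalues coincides with that of $\Kl_2(c;k)/\sqrt{|k|-1}$ as $c$ ranges over $k^\times$. The qualitative convergence to $\mu_{\mathrm{sc}}$ is then exactly Katz's Sato--Tate equidistribution theorem for Kloosterman sums, based on the determination of the geometric monodromy of $\KL_2$ as $\mathrm{SL}_2$. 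For the quantitative bound \eqref{eq-w1}, the plan is to combine an Erd\H{o}s--Tur\'an-type inequality, expressing $W_1$ in terms of Chebyshev Weyl sums $\sum_{c\in k^\times}U_m(\Kl_2(c;k)/(2\sqrt{|k|}))$, with bounds on those sums from applying Deligne's Riemann Hypothesis to symmetric powers of $\KL_2$; a careful optimization of the cut-off in $m$ yields the exponent $1/3$. This quantitative step is the main technical obstacle: Katz's theorem provides qualitative convergence virtually for free, but extracting an explicit power saving forces one to track the dependence on $m$ throughout the cohomological estimates.

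Part (4) follows by the same strategy applied to $\Gamma_B(k)$. The $K_{2,3}$-avoidance is immediate because $(x+a_1)^3-(x+a_2)^3$ is a polynomial of degree at most $2$ in $x$ (the leading cubic terms cancel), so two distinct translates of the cubic $B(k)$ still meet in at most two points. The spectral analysis uses Katz's theorem that the Birch sheaf also has geometric monodromy $\mathrm{SL}_2$ in characteristic at least $7$, which makes the Birch analogues of the Kloosterman Weyl sums amenable to the same Erd\H{o}s--Tur\'an/Deligne combination.
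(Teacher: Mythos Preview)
Your proposal is correct and follows essentially the same route as the paper: Fourier analysis on $k\times k$ identifies the non-trivial eigenvalues with $\pm$(Kloosterman or Birch) sums, the Weil bound gives~(2), and Katz's Sato--Tate theorem (together with the Erd\H{o}s--Tur\'an/Deligne optimization you sketch for~\eqref{eq-w1}, which the paper simply cites as a black box) gives~(3) and~(4). Two minor remarks: your geometric argument for $K_{2,3}$-freeness (two distinct translates of the hyperbola, resp.\ the cubic, meet in at most two points) is a pleasant direct variant of the paper's phrasing via \emph{symmetric Sidon sets}, and is equivalent to it; and your sentence ``the empirical measure of normalized non-trivial eigenvalues coincides with that of $\Kl_2(c;k)/\sqrt{|k|-1}$'' is slightly imprecise, since the eigenvalues come in $\pm$ pairs and hence the spectral measure is the \emph{symmetrization} of the Kloosterman empirical measure---one then needs the (obvious) symmetry of $\mu_{\mathrm{sc}}$ to conclude, a step the paper spells out explicitly.
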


\begin{figure}[ht]
  \includegraphics[width=\textwidth]{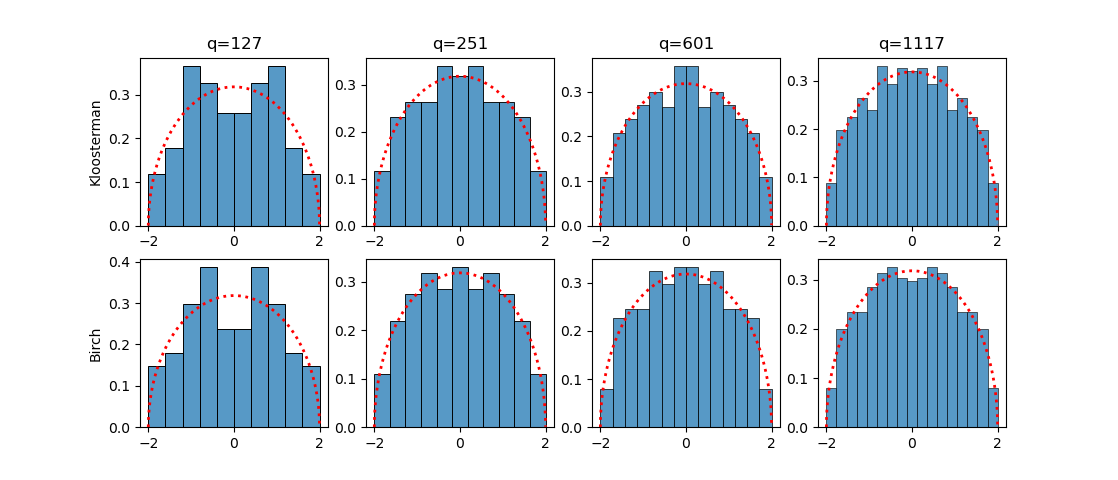}
  \caption{The normalized spectrum of the graphs $\Gamma_K(k)$ (first row) and $\Gamma_B(k)$ (second row), for $k = \Ff_{127}, \Ff_{251},\Ff_{601},\Ff_{1117}$. The dashed red curve is the density function of $\mu_{\mathrm{sc}}$, the semicircle distribution.}
\end{figure}

\begin{remark}
  We now make a few remarks about the statement of \cref{th-main}.
  \begin{enumerate}
  \item As is often the case in explicit constructions of pseudorandom
    graphs, both~$\Gamma_K(k)$ and $\Gamma_B(k)$ may have up to $d$
    vertices with loops (where $d$ is the degree). One can delete these
    loops and obtain graphs with essentially the same properties, but as
    is standard, it is more convenient for the analysis to include them.
  \item The bound $\abs \lambda \leq 2\sqrt {\abs k} = (2+o(1))\sqrt{d}$
    means that these graphs are near-Ramanujan graphs, and hence they
    enjoy essentially optimal pseudorandomness in the traditional sense;
    see, e.g., the survey of Krivelevich and Sudakov~\cite{ks}*{p.\,19}.
  \item The fact that these graphs do not contain a copy of $K_{2,3}$ is
    quite surprising. Indeed, the Kővári--Sós--Turán theorem~\cite{kst}
    implies that \emph{every} $n$-vertex graph with average degree at
    least $(\sqrt 2+o(1))\sqrt n$ contains a copy of $K_{2,3}$, and it
    was shown by~Füredi~\cite{furedi} that this bound is asymptotically
    sharp. That is, up to the constant factor $\sqrt{2}$, the graphs
    $\Gamma_K(k)$ and~$\Gamma_B(k)$ are as dense as possible among all
    $K_{2,3}$-free graphs, and in this sense extremely atypical among
    all graphs of the same edge density.

    In fact, an Erdős--Rényi random graph at the same density contains
    a copy of $K_{2,3}$ asymptotically almost surely (in fact, it has on
    the order of $n^2$ such copies). The same holds in other natural
    random graph models at the same density, such as random regular
    graphs with degree $d = \sqrt n$.
  \item The restriction to characteristic $\geq 7$ is necessary for the
    statement concerning the spectrum of the graphs
    $\Gamma_B(k)$. Indeed, if the characteristic of~$k$ is $2$ or $5$,
    then the limiting spectrum of $\Gamma_B(k)$ is a
    finitely\nobreakdash-supported atomic measure
    (see Remark~\ref{rm-finite} for details). On the other hand, these
    graphs are still $K_{2,3}$-free.

    If the characteristic of~$k$ is $3$, then one can check that
    $\Gamma_B(k)$ is a disjoint union of~$(\abs k-1)/2$ copies of the
    complete bipartite graph $K_{\abs k, \abs k}$, as well as one copy
    of the complete graph $K_{\abs k}$, and hence none of the properties
    above hold in this case.
    
  \item In \cref{th-main}, we define the graphs $\Gamma_B(k)$ and
    $\Gamma_K(k)$ to be Cayley sum graphs. However, every statement in
    \cref{th-main} would remain true if, instead, we defined them as
    Cayley graphs of the same groups with the same generating
    sets. However, for the remainder of the paper we work with Cayley
    sum graphs, because some of the variant constructions we consider
    later in Section \ref{sec:subsets} are defined in terms of
    non-symmetric generating sets, and hence do not naturally have
    (undirected) Cayley graphs. By working with Cayley sum graphs
    throughout, we can avoid having to worry about the symmetry of the
    generating set.

    \item The Wasserstein (also called Monge--Kontorovich or
      Rubinstein--Kontorovich) distance $W_1(\mu_1,\mu_2)$ between
      probability measures $\mu_1, \mu_2$ on a compact metric space~$X$ (in our case
      the interval $[-2,2]$) can be defined as
      \[
        W_1(\mu_1,\mu_2)=\sup_{f\text{ $1$-Lipschitz}}\Bigl|
        \int_{X}fd\mu_1-\int_Xfd\mu_2\Bigr|,
      \]
      where~$f$ runs over $1$-Lipschitz functions $X\to \Cc$ (see~\cite{k-u}
      for an introduction to Wasserstein metrics from the point of view of
      equidistribution).

    \item In a companion paper~\cite{jacobian}, we will describe other
      classes of examples of graphs with properties similar to those
      of $\Gamma_K(k)$ and~$\Gamma_B(k)$.  These are constructed using
      jacobians (and generalized jacobians) of algebraic curves over
      finite fields, and will show that there are very rich families
      of high-density $K_{2,3}$-free graphs with semicircular spectral
      distribution. The proofs of these will however require more
      sophisticated results in algebraic and arithmetic geometry.
  \end{enumerate}
\end{remark}

The remainder of this paper is organized as follows. In
Section \ref{sec:discussion}, we discuss our motivation for proving
\cref{th-main}, and why we consider it interesting and surprising. In
Section \ref{sec:proof}, we prove \cref{th-main}; our proof is quite short,
since the $K_{2,3}$-freeness of the graphs $\Gamma_K(k)$ and~$\Gamma_B(k)$ is quite elementary to prove, and the remaining properties
follow rather quickly from known, but very deep, results in number
theory and algebraic geometry. In Section \ref{sec:larsen}, we give an
alternative, and more self-contained, proof that the spectrum of
$\Gamma_K(k)$ is given by the semicircular distribution. Notably, this
proof exhibits a surprising connection between the spectral distribution
and the $K_{2,3}$-freeness of $\Gamma_K(k)$; this $K_{2,3}$-freeness is
critical to allow us to apply a group-theoretic tool called
\emph{Larsen's alternative}, which is itself the main tool towards
identifying the semicircle distribution as the limiting spectral
distribution. Finally, in Section \ref{sec:subsets}, we discuss a few variants
of our construction; in particular, we can similarly construct
$C_4$-free graphs of nearly optimal density, whose limiting spectral
distributions are no longer semicircular, but are specific measures that
we can describe and analyze.

\subsection*{Acknowledgments.} We thank H. T.\,Pham for pointing out
to us the paper of Soloveychik, Xiang and Tarokh, and T.\,Tokieda for
his help in finding the paper of Sunada.  We also thank T.\,Schramm
for insights on the relationship between semicircular spectra, small
subgraph counts and computational indistinguishability.

This project has received funding from the European Research Council (ERC) under the European Union’s Horizon 2020 research and innovation programme (grant agreement no.\,101170066). E.\,K.\, is partially supported by the SNF grant $219220$ and the SNF\nobreakdash-ANR ``Etiene'' grant $10003145$. Y.\,W.\,is supported by Dr.\,Max R\"ossler, the Walter Haefner Foundation
and the ETH Z\"urich Foundation.  

\section{Discussion and historical background}\label{sec:discussion}
\subsection{Why should we care?}
Why should one care about explicit graphs with semicircular spectrum? From the perspective of extremal graph theory, it is not clear that such graphs provide much advantage over other known constructions of spectrally pseudorandom graphs. Indeed, we are aware of very few applications where the knowledge of the full spectrum is meaningfully more powerful than simply bounding the largest and smallest non-trivial eigenvalues (although we return to this point in \cref{subsec:independence}).

One reason why we should care is purely philosophical: since almost all graphs (at any edge density) have a semicircular spectrum, it is very natural to look for explicit examples with the same property. This is very similar to the search for explicit normal numbers, for explicit Ramsey graphs, for pseudorandom generators in computer science, and for other instances of ``finding hay in a haystack''. 

Beyond its inherent appeal, this question is also closely related to
important notions in computational complexity theory and cryptography,
especially that of computational indistinguishability, introduced by
Goldwasser and Micali in \cite{gm}. Roughly speaking, one says that two probability distributions are \emph{computationally indistinguishable} if no efficient algorithm can distinguish samples from them (see \cite{avi}*{\S7.3.1} for a more thorough introduction). As such, one way of formalizing pseudorandomness, which is very influential in theoretical computer science, is to say that a construction is pseudorandom if it is computationally indistinguishable from the uniformly random distribution. 

Of course, proving computational indistinguishability amounts to proving limitations on the power of efficient algorithms, and hence is tantamount to proving that $P \neq NP$. Nonetheless, there is a great deal of interest in proving that certain restricted types of algorithms cannot distinguish an explicit pseudorandom object from a truly random one. For example, when it comes to distinguishing an explicit graph from a random graph, natural things one can try are to compute the spectrum and to count the number of copies of any constant-sized graph $H$, as both of these tasks can be easily done in polynomial time. Both of these are special cases of so-called \emph{low-degree tests}, which are those quantities that can be read off from a low-degree polynomial of the input (in this case, the entries of the adjacency matrix). In fact, it is not hard to see that in the setting of graphs, low-degree polynomials are equivalent to \emph{signed} subgraph counts (see, e.g., \cite{bb}*{Th.\,2.1}). Moreover, the \emph{low-degree heuristic} (see, e.g., \cite{kwb}*{\S\,4}) roughly states that such low-degree tests capture the entire notion of computational indistinguishability, in that two distributions are indistinguishable if and only if they agree on all low-degree tests.

Our construction gives a family of graphs that are easily
distinguishable from random graphs (by checking the presence of
$K_{2,3}$ as a subgraph), and yet are \emph{spectrally
  indistinguishable}. This implies that if one wants to prove
indistinguishability, it is not enough to restrict oneself to spectral
algorithms: some graphs that are easily distinguished from random graphs may
still have a completely random-like spectrum. One can compare the
situation to that of the \emph{planted clique problem}, which has become
a cornerstone problem in indistinguishability, where the best known
distinguishing algorithm \cite{aks} uses only spectral information.

Another way of looking at the same statement is to ask which structural features of a graph are determined by its spectrum. For example, the fact that the counts of short cycles determine the moments of the spectral distribution implies that if two graphs have asymptotically equal spectra, then they have roughly equal numbers of all short cycles. One could ask if something stronger is true: are the counts of all small subgraphs controlled by the spectrum? Our result shows that the answer is negative: our graphs are spectrally indistinguishable from a random graph of the same density, yet have very different numbers of copies of $K_{2,3}$. 

In the study of random matrices, semicircular limiting distributions
often come with other desirable properties, including \emph{eigenvector
  delocalization} and \emph{eigenvalue repulsion}. For a detailed
introduction to these topics, see the survey \cite{tv} of Tao and
Vu. Our graphs~$\Gamma_K(k)$ and~$\Gamma_B(k)$ exhibit essentially optimal
eigenvector delocalization, meaning that the $\ell^\infty$ norm of every
$(\ell^2$-normalized) eigenvector is $O(n^{-1/2})$, where $n$ is the
number of vertices of the graph. This follows immediately from the
algebraic structure of the graph; \cref{pr-1} below shows that the
eigenvectors of $\Gamma_K(k)$ and $\Gamma_B(k)$ are linear combinations of at
most~$2$ characters of the abelian group $k\times k$, which immediately
implies that every entry of the eigenvector is at most $O(n^{-1/2})$ in
absolute value. On the other hand, our graphs are extremely far from
exhibiting eigenvalue repulsion; it again follows immediately from
\cref{pr-1} below, as well as the symmetry properties
$K(a,b;k)=K(a\alpha,b\alpha^{-1};k)$ and~$B(a,b;k)=B(a\alpha,b\alpha^3;k)$ of Kloosterman and Birch sums (for
$\alpha$ non-zero), that every non-trivial eigenvalue of
$\Gamma_K(k)$ and~$\Gamma_B(k)$ appears with very high multiplicity, namely
multiplicity at least $\abs k-1 = (1+o(1))\sqrt n$ (in the case
of~$\Gamma_K(k)$). Again, these facts raise interesting questions about
how much the phenomena of eigenvector delocalization and eigenvalue
repulsion have to do with the limiting distribution itself.

\subsection{History}
Despite its natural appeal, there seems to have been very little prior
work on this question, and we are only aware of a few prior results.
\begin{enumerate}
\item First, McKay~\cite{mckay} determined the limiting spectral
  distribution for \emph{any} sequence of~$d$\nobreakdash-regular graphs, so long
  as $d$ is fixed and the girth\footnote{In fact, he proved that it
    suffices 
    that the number of short cycles is not too large.} of the graphs
  tends to infinity. Indeed, under such assumptions, one can
  explicitly compute the moments of the spectrum, as this boils down
  to counting rooted trees by the girth condition. The limiting
  distribution is the so-called \emph{Kesten--McKay distribution},
  given by the density function\footnote{This is not the most standard
    form of the Kesten--McKay distribution which appears in the
    literature; here we are renormalizing by $\sqrt{d-1}$ so that all
    of our limiting distributions are supported on $[-2,2]$. As
    pointed out by Serre~\cite{serre}*{p.\,80} in a similar context,
    in the case where $d=p+1$ for some prime number~$p$, this
    distribution is related to the Plancherel measure for the
    group~$\PGL_2(\Qq_p)$.}
  \[
    \frac{d(d-1)\sqrt{4-x^2}}{2\pi(d^2-(d-1)x^2)}
  \]
  for $x \in [-2,2]$. As $d \to \infty$, this density function
  approaches that of the semicircle distribution. As a consequence, if
  we have a sequence of regular graphs whose girth tends to infinity and
  whose degrees tend to infinity sufficiently slowly, then the limiting
  spectral distribution will be semicircular. Explicit estimates on the
  required relations between the order, degree and girth of the graphs
  are given by Sunada \cite{sunada} (also quoted in \cite{ms}*{Th.\,4})
  and Dumitriu--Pal\footnote{\ While
    \cite{dp}*{Th.\,1} is only stated for \emph{random} regular
    graphs, their proof uses nothing more than the control on the number of
    short cycles in these graphs.} \cite{dp}*{Th.\,1}.

  A natural family of explicit graphs satisfying (essentially) these
  conditions are certain Cayley graphs of the symmetric group $S_n$,
  namely those generated by the  transpositions
  $\{(12),(13),\dots,(1n)\}$. The fact that the spectrum of this
  family converges to the semicircle distribution was first obtained
  by Biane~\cite{biane}, by computing the moments and (implicitly)
  using the fact that these Cayley graphs have few short cycles. For
  an exposition of the proof, as well as more on the spectrum of these
  graphs and their history, see the note of Chapuy and Féray~\cite{cf}.

  For another example, the famous Ramanujan graphs of Lubotzky, Phillips
  and Sarnak~\cite{lps} give an explicit family of $d$-regular graphs
  (for $d=p+1$, where~$p$ is a prime number) whose girth tends to
  infinity with the order of the graph. As such, an appropriately chosen
  sequence of such Ramanujan graphs would yield an explicit sequence of
  graphs whose limiting spectral distribution is semicircular. However,
  note that in both of these examples, the degree of the graphs grows
  very slowly with their order, and this appears to be necessary to use such an approach (e.g., in~\cite{dp}*{Th.\,1}, the
  degree~$d_n$ must be $n^{o(1)}$, for a graph with~$n$ vertices). By
  contrast, our \cref{th-main} gives a degree that grows as fast as
  $\sqrt{n}$.
  
\item At the other extreme, this question was considered for graphs of
  \emph{linear} degree by Soloveychik, Xiang and Tarokh \cite{sxt}, in
  the equivalent guise of constructing explicit symmetric matrices
  with $\pm 1$ entries whose limiting spectrum is semicircular. In our
  language, they construct, for all $n=2^m-1$ , a family
  $\mathcal{G}_n$ of $n$ explicit graphs with~$n$ vertices, with the
  property that if one samples a graph~$\Gamma_n$ from $\mathcal G_n$
  uniformly at random, then almost surely the sequence $(\Gamma_n)$
  has the semicircular distribution as its limiting spectral
  distribution. Due to this random sampling, their construction is not
  fully explicit, although they conjecture that their result can be
  strengthened so that it holds for all choices of
  $\Gamma_n \in \mathcal G_n$ (and hence would give an explicit
  construction).
\end{enumerate}

In both of these examples, the convergence to the semicircular
distribution is proved using the method of moments: the moments of the
spectrum can be explicitly computed in terms of combinatorial counts of
closed walks in the graphs, and one can prove that these moments
converge to those of the semicircle law.

By contrast, our technique is completely different, and uses the fact
that the semicircular distribution appears in an entirely unrelated
setting. Namely, if one picks a matrix in~$\SU_2(\Cc)$ at random
according to the Haar probability measure, then its trace is distributed
according to the semicircle law (this follows from the Weyl integration
formula; see, e.g., \cite{bourbaki}*{p.\,339,\,Example}). This
distribution arises this way very naturally in number theory, where it
is called the \emph{Sato--Tate distribution} associated to $\SU_2(\Cc)$
(see Sutherland's survey~\cite{sutherland} for an introduction to this topic). To the best of our knowledge, there is no
direct connection between the spectrum of Wigner matrices and the traces
of $\SU_2(\Cc)$-random matrices, apart from the fact that they both
happen to have the same distribution.

We prove that the spectrum of our graphs converges to the semicircular
distribution by exploiting this coincidence. Specifically, we use
Deligne's \emph{equidistribution theorem}~\cite{deligne}*{\S\,3.5},
which allows us to relate the spectrum of the graphs to the traces of
certain matrices, which behave like Haar-distributed random matrices,
and results of Katz~\cite{gkm} which show that these matrices are in
$\SU_2(\Cc)$.

\subsection{The independence number}\label{subsec:independence}

Recall that given two graphs~$H_1$ and~$H_2$, the \emph{Ramsey
  number}\footnote{For an introduction to Ramsey theory, see the survey of Conlon, Fox and Sudakov~\cite{cfs} or the lecture notes of Wigderson~\cite{yuvalnotes}.}  $r(H_1,H_2)$
is the smallest integer~$N$ such that, for any partition of the edges of the
complete graph $K_N$ on $N$ vertices in two parts $E_1$ and~$E_2$, there
is either a copy of~$H_1$ in $E_1$ or one of~$H_2$ in $E_2$. In
particular, if $H_2=K_t$ for some integer~$t\geq 2$, then it follows
elementarily that an integer~$n$ is less than $r(H_1,K_t)$ if and only if there is
a graph with $n$ vertices without any copy of~$H_1$ and with independence
number less than~$t$. (We recall that the \emph{independence number} $\alpha(\Gamma)$ of a graph $\Gamma$ is the maximum size of a set of vertices containing no edges.)

Let~$C_4$ denote the $4$-cycle. One of the central open problems in
graph Ramsey theory is the estimation of the Ramsey number $r(C_4,K_t)$,
which is therefore equivalent to asking for the smallest independence
number among all $n$-vertex $C_4$-free graphs. As an $n$-vertex
$C_4$-free graph has $O(n^{3/2})$ edges by the Kővári--Sós--Turán
theorem~\cite{kst}, one immediately finds (see, e.g., \cite{aks80}) that
it must have independence number at least $c\sqrt n$, for some absolute
constant $c>0$. An infamous conjecture of Erdős~\cite{erdos1},
reiterated many times, posits that in fact there should exist some
absolute constant~$\delta>0$ such that
$\alpha(\Gamma) \geq \delta n^{\frac 12 + \delta}$ for every $n$-vertex
$C_4$-free graph $\Gamma$. To date, the best known lower bound is
$\alpha(\Gamma)\geq c\sqrt n \log n$ for some constant~$c>0$, which
follows immediately from the classical
Ajtai--Komlós--Szemerédi~\cite{aks80} bound on the independence number
of graphs with few triangles (note that in a $C_4$-free graph, every
edge lies on at most one triangle).

In the other direction, the best known upper bound for $\alpha(\Gamma)$
is $O((n\log n)^{\frac 23})$, which follows from work of
Bohman--Keevash~\cite{bk} on the random $C_4$-free process (see
also~\cite{mv} for a very different construction of Mubayi and
Verstraëte achieving the same bound). The main difficulty in closing the
gap between the lower and the upper bound seems to be the following: any
truly ``random-like'' $C_4$-free graph cannot have average degree much
greater than $n^{\frac 13}$, and hence cannot have independence number
much smaller than $n^{\frac 23}$. While there do exist a plethora of
constructions of denser $C_4$-free graphs, of average degree as large as~$\sqrt n$, all known constructions have a much larger independence
number, namely on the order of~$n^{\frac 34}$, than what we would expect
in a random graph of the same density.

Why $n^{\frac 34}$? It follows immediately from the expander mixing
lemma (see \eqref{eq:ind set} below) that if an $n$-vertex $d$-regular
graph has all of its non-trivial eigenvalues bounded by $\lambda$, then the bound
$\alpha(\Gamma) \leq n \lambda/d$ holds; a slightly more precise bound is
given by Hoffman's ratio bound (see the account by
Haemers~\cite{hoffman}). Known constructions of dense $C_4$-free graphs
are optimally spectrally pseudorandom, thus satisfying
$\lambda = O(\sqrt d) = O(n^{\frac 14})$. This immediately implies that
their independence number is at most $O(n^{\frac 34})$; however, while
there is no reason to expect this spectral bound to be tight in general,
it turns out to be so for all known examples of dense $C_4$-free graphs
(see the work of Mubayi and Williford~\cite{mw}).

In principle, more detailed knowledge of the whole spectrum, rather than
simply a bound on the magnitude of the non-trivial eigenvalues, should
allow one to obtain tighter control on the independence number. Indeed,
let $\Gamma$ be an $n$-vertex $d$-regular graph, let $A$ be its adjacency
matrix, and let $A = \sum_{i=1}^n \lambda_i v_i v_i^t$ be the spectral
decomposition of $A$, where $\lambda_1=d$ and
$v_1 = (\frac{1}{\sqrt n},\dots,\frac{1}{\sqrt n})$. If $I$ is any
independent set in $\Gamma$, then its indicator vector $\mathbf 1_I$ satisfies
\begin{equation}\label{eq:ind set}
  0=\langle \mathbf 1_I, A \mathbf 1_I\rangle =
  \sum_{i=1}^n \lambda_i \langle \mathbf 1_I,v_i\rangle^2 =
  \frac{d\abs I^2}{n}+\sum_{i=2}^n \lambda_i
  \langle \mathbf 1_I, v_i \rangle^2.
\end{equation}

By plugging in the bound $\abs{\lambda_i} \leq \lambda$ and the formula
$\sum_{i=1}^n \langle \mathbf 1_I,v_i\rangle^2=\abs I$, one immediately
obtains the bound $\abs I \leq n\lambda/d$ mentioned above. However,
this is in principle quite wasteful: we may expect to have a great deal
of cancellation in the sum
$\sum \lambda_i \langle \mathbf 1_I,v_i\rangle^2$, which could yield a
much stronger bound. Concretely, if the bound
$\alpha(\Gamma) \leq n \lambda/d$ is essentially tight, it means that a
largest independent set in $\Gamma$ must correlate very strongly with the
eigenvectors corresponding to the most negative eigenvalues of $\Gamma$.

In some instances, one can use such an idea to slightly improve on this
simple bound (see, e.g., Newman's thesis~\cite{newman}*{\S6.11}),
and there are a few other examples (e.g. the paper~\cite{etz} of
Elphick, Tang and Zhang) of using the entire spectrum to bound the
independence number. Moreover, it is very tempting to believe that such
ideas may be relevant for the case of~$C_4$\nobreakdash-free graphs: the classical
constructions of dense $C_4$-free graphs are strongly regular, and thus
their most negative eigenvalue has enormous multiplicity (roughly~$n/2$). It is thus relatively easy to construct an indicator vector of a
set that correlates very strongly with these eigenvectors.

This brings us back to our graphs. Since their limiting spectral
distribution is continuous, we are hopeful that their independence
number is substantially smaller than that of the classical
constructions: we believe that this continuous spectrum should yield
strong cancellation in \eqref{eq:ind set}. That said, proving this is
likely to be quite difficult: even proving that sets of the form
$I \times J$, where $I,J \subset \Ff_p$ are intervals, are not
independent in $\Gamma_K(\Ff_p)$
if~$\abs I \abs J \geq p^{\frac 32-\delta}$ is a difficult open
problem (see Theorem 13 and the subsequent discussion in the survey of
Shparlinski \cite{shparlinski}). While our graphs are not $C_4$-free,
they ``almost'' are.  Moreover, we can extract well-behaved $C_4$-free
subgraphs of them in such a way that the limiting spectral
distribution remains an explicit distribution with continuous density
(although it is not semicircular anymore; see Section
\ref{sec:subsets}).  Alternatively, one can simply
study $r(K_{2,3},K_t)$, or equivalently the minimum independence
number of $K_{2,3}$-free graphs, a problem with a great deal of
similarities with the $C_4$-free problem discussed above, and where
our graphs can be directly applied (see, e.g., the paper~\cite{clrz}
of Caro, Li, Rousseau and Zhang).

\section{Proof of Theorem \ref{th-main}}\label{sec:proof}

Although our proof will be quite short, the spectral information relies
on very deep results of algebraic geometry, namely 
\emph{Deligne's equidistribution theorem}, and on Katz's concrete
versions of this result for the specific case of Kloosterman sums and
Birch sums.

On the other hand, the fact that~$\Gamma_K(k)$ and~$\Gamma_B(k)$ do
not contain $K_{2,3}$ is straightforward.

More generally, let $G$ be an abelian group and $S$ a subset of~$G$. We
define the \emph{Cayley sum graph} $\Gamma(G,S)$ to have vertex set~$G$ and and edge
joining $x$ to~$y$ if and only if~$x+y\in S$.  We recall that a subset
$S\subset G$ is called a \emph{Sidon set} if the equation
\begin{equation}\label{eq-sidon}
  \alpha+\beta=\gamma+\delta,\quad\quad (\alpha,\beta,\gamma,\delta)\in S^4, 
\end{equation}
only has solutions with $\alpha\in \{\gamma,\delta\}$. Moreover, we say
that $S$ is a \emph{partial symmetric Sidon set}, with center some
element~$a_0\in S$, if the equation~(\ref{eq-sidon}) has only solutions
with $\alpha\in\{\gamma,\delta\}$ or~$\alpha+\beta=\gamma+\delta=a_0$
(in which case $\{\alpha,\gamma\}\subset S\cap (a_0-S)$). If the set~$S$
also satisfies $S=a_0-S$, then we say that $S$ is a \emph{symmetric
  Sidon set} with center~$a_0$ (as in~\cite{ffk2}).

We then have the following simple result, variants of which are
well-known in the literature.

\begin{proposition}\label{prop:K23}
  Let $G$ be an abelian group and $S$ a subset of~$G$.

  \begin{enumth}
  \item If $S$ is a Sidon set in~$G$, then $C_4\not\subset\Gamma(G,S)$.

  \item If $S$ is a partial symmetric Sidon set in~$G$, then
    $K_{2,3}\not\subset\Gamma(G,S)$.
  \end{enumth}
\end{proposition}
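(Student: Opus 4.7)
Both parts follow the same strategy: translate the existence of the forbidden subgraph into a non-trivial additive quadruple in $S$, and then invoke the Sidon (or partial symmetric Sidon) hypothesis to force a collapse among the vertices.

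For part \emph{(1)}, suppose $C_4 \subset \Gamma(G,S)$ with consecutive vertices $v_1,v_2,v_3,v_4$. Then the four edge-sums
\[
\alpha = v_1+v_2,\quad \beta = v_3+v_4,\quad \gamma = v_2+v_3,\quad \delta = v_4+v_1
\]
all lie in $S$ and satisfy $\alpha+\beta = v_1+v_2+v_3+v_4 = \gamma+\delta$. The Sidon condition forces $\alpha\in\{\gamma,\delta\}$, which gives either $v_1=v_3$ or $v_2=v_4$, contradicting the fact that the vertices of $C_4$ are distinct.

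For part \emph{(2)}, suppose $K_{2,3}\subset \Gamma(G,S)$, with bipartition $\{u_1,u_2\}$ and $\{w_1,w_2,w_3\}$; the six edge-sums $u_i+w_j$ all lie in $S$. For each pair $\{j_1,j_2\}\subset\{1,2,3\}$, the quadruple
\[
\alpha = u_1+w_{j_1},\quad \beta = u_2+w_{j_2},\quad \gamma = u_1+w_{j_2},\quad \delta = u_2+w_{j_1}
\]
sits in $S^4$ and satisfies $\alpha+\beta=\gamma+\delta$. The partial symmetric Sidon hypothesis leaves only two possibilities: either $\alpha\in\{\gamma,\delta\}$, which forces $u_1=u_2$ or $w_{j_1}=w_{j_2}$ (impossible by distinctness), or
\[
u_1+u_2+w_{j_1}+w_{j_2} = a_0.
\]
Hence this last equation must hold simultaneously for the three pairs $\{1,2\},\{1,3\},\{2,3\}$. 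Subtracting any two of these equalities yields $w_{j}=w_{j'}$ for some $j\neq j'$, a contradiction.

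The argument is completely elementary once one writes the edge-sums in the right way; the only mildly delicate point is handling the $K_{2,3}$ case, where one must use all three pairs of right-hand vertices to rule out the ``exceptional'' solutions allowed by the partial symmetric Sidon definition. No deeper structure of $G$ is needed, so the proposition applies uniformly to the groups and sets arising later in the paper.
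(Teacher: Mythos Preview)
Your proof is correct and follows essentially the same approach as the paper: both arguments convert a putative $C_4$ or $K_{2,3}$ into additive quadruples in $S$ and use the (partial symmetric) Sidon condition to force a vertex collision. The only cosmetic difference is that the paper phrases part~(2) in terms of bounding the number of length-$2$ paths between a fixed pair of opposite vertices in a $4$-cycle, whereas you work directly with the three pairs $\{w_{j_1},w_{j_2}\}$ and subtract the resulting equations $u_1+u_2+w_{j_1}+w_{j_2}=a_0$; these are the same computation.
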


\begin{proof}
  Let $(w,x,y,z)$ be the vertices of a four-cycle in $\Gamma(G,S)$. We define
  $\alpha=w+x$, $\gamma=x+y$, $\beta=y+z$ and $\delta=z+w$, and note that these are elements
  of~$S$ such that
  \[
    \alpha+\beta=\gamma+\delta.
  \]

  (1) If~$S$ is a Sidon set, then we have either $\alpha=\gamma$, and hence
  $w=y$, or $\alpha=\delta$, and hence~$x=z$; both of these are
  impossible.

  (2) If~$S$ is a partial symmetric Sidon set with center~$a_0\in G$,
  then the only remaining option is that
  $\alpha+\beta=\gamma+\delta=a_0$ with $\alpha$,
  $\gamma\in S\cap (a_0-S)$. Given the values of $w\in G$
  and~$(\alpha,\gamma)\in S^2$, this gives \emph{at most one} $4$-cycle
  of the form
  \[
    (w,\alpha-w,\gamma-\alpha+w, a_0-\gamma-w)
  \]
  with opposite vertices $w$ and $y=\gamma-\alpha+w$. We conclude that there is no copy of $K_{2,3}$, either because every
  pair of vertices $(w,y)$ are joined by at most two distinct paths of
  length~$2$, or because $K_{2,3}$ contains two different $4$-cycles
  with opposite vertices.
 \end{proof}

Thus, the first statement of Theorem~\ref{th-main} follows from the
elementary observation that~$K(k)$ and~$B(k)$ (the latter for a field $k$ of 
characteristic different from~$3$) are symmetric Sidon sets with
center~$0$ in~$k\times k$, as stated in the following proposition.

\begin{proposition}\label{prop:symmetric sidon}
  For every finite field $k$, the set $K(k)$ is a symmetric Sidon set in
  $k \times k$ (with center $0$). If the characteristic of~$k$ is
  $\not=3$, then the same property holds for $B(k)$.
\end{proposition}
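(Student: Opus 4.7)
The plan is to reduce the verification of the symmetric Sidon condition to the elementary fact that a pair of elements of a field is determined by its sum and its product. Both $K(k)$ and $B(k)$ are graphs of odd functions on $k$, namely $x\mapsto 1/x$ (on $k^{\times}$) and $x\mapsto x^3$. Negation therefore preserves them: $-K(k)=K(k)$ and $-B(k)=B(k)$, which is exactly the symmetry condition $S=0-S$ with center $0$. Hence only the Sidon-type condition on solutions of $\alpha+\beta=\gamma+\delta$ in $S^4$ remains to be checked.

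I would start from such a relation, write each element as $(x_i,f(x_i))$ with $f(x)=1/x$ in the Kloosterman case and $f(x)=x^3$ in the Birch case, and observe that the relation is equivalent to the pair of scalar equations
\begin{equation*}
x_1 + x_2 = x_3 + x_4 =: s, \qquad f(x_1) + f(x_2) = f(x_3) + f(x_4).
\end{equation*}
The goal is then to show that either $s=0$ (which will correspond to the allowed center case) or the multisets $\{x_1,x_2\}$ and $\{x_3,x_4\}$ coincide (which forces $\alpha\in\{\gamma,\delta\}$). Once both the sum $s$ and the product $x_1 x_2 = x_3 x_4$ match, the two multisets are precisely the roots of the common monic quadratic $X^2-sX+x_1 x_2$, so the real task is to extract the equality of products from the second equation.

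In the Kloosterman case this is immediate: since no $x_i$ vanishes, the second equation reads $s/(x_1 x_2)=s/(x_3 x_4)$, and $s\neq 0$ forces $x_1 x_2 = x_3 x_4$. In the Birch case, the identity $u^3+v^3=(u+v)\bigl((u+v)^2-3uv\bigr)$ rewrites the second equation as $3s(x_3 x_4 - x_1 x_2)=0$, so one needs both $s\neq 0$ and $\cha(k)\neq 3$ to conclude; this characteristic condition is the only substantive obstacle, and it is genuinely necessary, since in characteristic $3$ the second equation degenerates into the cube of the first and imposes no new constraint on the pairs. Finally, in the center case $s=0$, the oddness of $f$ yields $f(x_2)=-f(x_1)$ automatically, so $\beta=-\alpha$ and $\delta=-\gamma$ in $k\times k$, and therefore $\alpha+\beta=\gamma+\delta=0$, completing the verification.
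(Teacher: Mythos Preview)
Your proof is correct and follows essentially the same approach as the paper: both reduce to the fact that an unordered pair in a field is determined by its sum and product, and both isolate the exceptional case where the first (or second) coordinate of the common sum vanishes. Your treatment of $B(k)$ via the factorization $x^3+y^3=(x+y)\bigl((x+y)^2-3xy\bigr)$ is marginally cleaner than the paper's direct substitution $y=u-x$, but the content is identical.
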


We remark that these facts appear implicitly in several places in the
literature, e.g.\ both in the paper \cite{ruzsa}*{p.\,349--350} of
Ruzsa, in a paper~\cite{weil2lectures}*{p.\,118} of Katz for $B(k)$, and
in Kloosterman's work~\cite{kloosterman}*{p.\,425} for~$K(k)$, but as
far as we are aware, not explicitly, although they are related to many of the known constructions of Sidon sets
(see, e.g., \cite{e-m}).

\begin{proof}[Proof of \cref{prop:symmetric sidon}]
  Let us fix some $(u,v) \in k \times k$. We wish to count the number of
  solutions to $(x,1/x)+(y,1/y) = (u,v)$ for $(x,1/x),(y,1/y) \in K(k)$;
  it suffices to show that if $(u,v)\neq (0,0)$, then there is at most
  one solution up to ordering. If $v=0$, then we have~$1/x=-1/y$, hence
  $x=-y$, and hence $u=0$ as well, so we may assume that $v \neq 0$. The
  equation $1/x+1/y=v$ is equivalent to $x+y=vxy$, which in turn implies
  that $xy=u/v$ using that $x+y=u$. That is, from the equation
  $(x,1/x)+(y,1/y)=(u,v)$, we have deduced that $x+y=u$ and
  $xy=u/v$. But we now know the sum and product of $x$ and $y$, which
  means that $x,y$ are unique up to ordering, as claimed. This proves
  the statement for $K(k)$.

  We now assume that~$k$ has characteristic~$\not=3$, and prove the
  claim for $B(k)$.  Again, given~$(u,v)\in k\times k$, we wish to
  determine the solutions to $(x,x^3)+(y,y^3)=(u,v)$. As before, if
  $u=0$ then $v=0$ as well, so we may assume $u \neq 0$. Plugging $y=u-x$ into the equation~$x^3+y^3=v$, we find that
  $x^3+(u-x)^3=v$, or equivalently that $3ux^2-3u^2x+u^3=v$. This is a
  quadratic equation in $x$ with non-zero leading coefficient (by the
  assumptions), and hence there are at most two values of $x$ satisfying
  this equation. Given $x$, the value of~$y=u-x$ is uniquely determined
  (and is in fact the other root of the quadratic equation), which
  completes the proof.
\end{proof}

To prove the remainder of the theorem, we require the next property,
which is also well-known. We denote here by~$\widehat{G}$ the
character group of a finite group~$G$.

\begin{proposition}\label{pr-1}
  Let $G$ be a finite abelian group and $S\subset G$ a non-empty subset.
  
  \begin{enumth}
  \item The spectrum of the Cayley sum graph~$\Gamma(G,S)$ consists of the
    numbers
    \[
      \sum_{y\in S}\chi(y)
    \]
    for all characters~$\chi\in \widehat{G}$ such that $\chi^2=1$ and
    of the numbers
    \[
      -\Bigl|\sum_{y\in S}\chi(y)\Bigr|,\quad\quad \Bigl|\sum_{y\in
        S}\chi(y)\Bigr|
    \]
    for all pairs $(\chi,\bar{\chi})$ of non-real characters.
  \end{enumth}

If some of the sums appearing in these statement coincide, for
different choices of~$\chi$, then the corresponding eigenvalue occurs
with multiplicity equal to the number of characters that give rise to
it.
\end{proposition}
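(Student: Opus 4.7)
The plan is to diagonalize the adjacency matrix $A$ of $\Gamma(G,S)$ via Fourier analysis on the abelian group $G$. First, I would note that $A$ is indexed by $G\times G$ with entries $A_{x,y} = \mathbf{1}_S(x+y)$, and that the orthonormal basis of $\Cc^G$ given by the characters $v_\chi(x) = \chi(x)$ ($\chi\in\widehat{G}$) will almost -- but not quite -- diagonalize $A$. Indeed, setting $\widehat{\mathbf 1_S}(\chi) = \sum_{s\in S}\chi(s)$, a direct computation gives
\[
(Av_\chi)(x) = \sum_{y\in G}\mathbf{1}_S(x+y)\chi(y) = \sum_{s\in S}\chi(s-x) = \bar\chi(x)\widehat{\mathbf 1_S}(\chi),
\]
so that $Av_\chi = \widehat{\mathbf 1_S}(\chi)\,v_{\bar\chi}$. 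Thus $A$ sends the line $\Cc v_\chi$ to $\Cc v_{\bar\chi}$, and so preserves the subspace $V_\chi := \Cc v_\chi + \Cc v_{\bar\chi}$.

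Next I would split into two cases according to whether $\chi$ is real (\emph{i.e.}~$\chi^2=1$) or not. If $\chi^2 = 1$ then $v_\chi = v_{\bar\chi}$, so $v_\chi$ is an eigenvector of $A$ with eigenvalue $\widehat{\mathbf 1_S}(\chi) = \sum_{s\in S}\chi(s)$ (which is automatically real since $\chi$ is real-valued). If $\chi^2 \neq 1$, then $v_\chi$ and $v_{\bar\chi}$ are linearly independent and $V_\chi$ is a 2-dimensional $A$-invariant subspace on which $A$ is represented, in the basis $(v_\chi, v_{\bar\chi})$, by
\[
\begin{pmatrix} 0 & \widehat{\mathbf 1_S}(\bar\chi) \\ \widehat{\mathbf 1_S}(\chi) & 0 \end{pmatrix} = \begin{pmatrix} 0 & \overline{\widehat{\mathbf 1_S}(\chi)} \\ \widehat{\mathbf 1_S}(\chi) & 0 \end{pmatrix}.
\]
The eigenvalues of this matrix are $\pm |\widehat{\mathbf 1_S}(\chi)|$, so $V_\chi$ contributes the two eigenvalues $\pm|\sum_{s\in S}\chi(s)|$ to the spectrum of $A$, each with multiplicity one in $V_\chi$.

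Finally I would assemble the pieces. Since $\widehat{G}$ is partitioned into the real characters and the pairs $\{\chi,\bar\chi\}$ of non-real characters, the corresponding subspaces give a direct sum decomposition $\Cc^G = \bigoplus_{\chi^2=1}\Cc v_\chi \oplus \bigoplus_{\{\chi,\bar\chi\}}V_\chi$ of $\Cc^G$ into $A$-invariant subspaces, and the list of eigenvalues above exhausts the spectrum of $A$. The multiplicity statement follows by grouping characters producing the same eigenvalue. There is no real obstacle here; the only point requiring minor care is the bookkeeping of the 2-dimensional blocks and the verification that the decomposition of $\Cc^G$ indexed by unordered pairs $\{\chi,\bar\chi\}$ is indeed a direct sum, which is immediate from the fact that the characters form an orthonormal basis of $\Cc^G$.
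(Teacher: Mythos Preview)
Your proof is correct and follows essentially the same approach as the paper: both compute $A v_\chi = \widehat{\mathbf 1_S}(\chi)\,v_{\bar\chi}$, then split into the cases of real characters (eigenvectors) and pairs of non-real characters (two-dimensional invariant blocks with eigenvalues $\pm|\widehat{\mathbf 1_S}(\chi)|$). Your version is slightly more explicit about assembling the direct sum decomposition of $\Cc^G$, but the argument is the same.
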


\begin{proof}
  If~$A$ denotes the adjacency matrix, viewed as a linear map acting
  on the space~$\mathcal{C}$ of complex-valued functions on~$G$, we
  simply observe that by direct computation, we have
  \[
    A(\chi)=S(\chi)\bar{\chi}
  \]
  for any character~$\chi$ of~$G$, where
  \[
    S(\chi)=\sum_{y\in S}\chi(y).
  \]

  It follows that any real character~$\chi$ is an eigenvector of~$A$
  with eigenvalue $S(\chi)$, and that  any pair $(\chi,\bar{\chi})$
  of distinct non-real characters spans a $2$-dimensional invariant
  subspace on which~$A$ acts according to the matrix
  \[
    \begin{pmatrix}
      0& S(\bar{\chi})\\
      S(\chi) & 0
    \end{pmatrix}.
  \]

  The characteristic polynomial of this matrix is~$X^2-|S(\chi)|^2$, and hence its
  eigenvalues are the numbers $|S(\chi)|$ and $-|S(\chi)|$.
\end{proof}

\begin{proof}[Proof of Theorem~\ref{th-main}]
  We first assume that the characteristic of~$k$ is not~$2$.  We apply Proposition \ref{pr-1} to~$G=k\times k$ and~$S=K(k)$ or~$B(k)$. Fixing a
  non-trivial additive character~$\psi$ of~$k$, the characters
  of~$k\times k$ are of the form
  \[
    (x,y)\mapsto \psi(ax+by)
  \]
  for $(a,b)\in k\times k$, and none of them is a real character, with
  the exception of the trivial one. Thus the eigenvalues of the Cayley
  sum graphs $\Gamma_K(k)$ and $\Gamma_B(k)$ are, on the one hand the
  trivial eigenvalue, equal to the degree $|K(k)|$ or $|B(k)|$ of the
  graph, and on the other hand the quantities
  \[
    |K(a,b;k)| \quad \text{and}\quad -|K(a,b;k)|,\quad\quad\text{ or }\quad\quad
    |B(a,b;k)|\quad\text{and}\quad -|B(a,b;k)|,
  \]
  in terms of the exponential sums
  \[
    K(a,b;k)=\sum_{x\in k^{\times}}\psi(ax+bx^{-1}),\quad\quad
    B(a,b;k)=\sum_{x\in k}\psi(ax+bx^3).
  \]
    
  In the number theory literature, these are known as \emph{Kloosterman
    sums} or \emph{Birch sums}, respectively. They are real numbers, so
  that 
  \[
  K(-a,-b;k)=K(a,b;k)\quad \text{and} \quad B(-a,-b;k)=B(a,b;k).
  \]
 
  A well-known application of the Riemann hypothesis for curves over
  finite fields, due to Weil, implies the bounds
  \[
    |K(a,b;k)|\leq 2\sqrt{|k|},\quad\quad |B(a,b;k)|\leq 2\sqrt{|k|},
  \]
  if $(a,b)\not=(0,0)$ (see, e.g.,~\cite{ik}*{\S\,11.7} for an
  elementary proof using Stepanov's method, in the case of Kloosterman
  sums), thus proving the second statement of Theorem~\ref{th-main}.
  
  The final and deepest statement (for which no fully elementary proof
  is known) is provided by work of Katz.  In the case of Kloosterman
  sums, Katz proved in~\cite{gkm}*{Th.\,11.1,\,Th.11.4} that the
  normalized Kloosterman sums $K(a,1;k)/\sqrt{|k|}$ for
  $a\in k^{\times}$ become equidistributed according to the semicircle
  distribution as $|k|\to+\infty$.  The relation
  \[
    K(a,b;k)=K(ab,1;k)
  \]
  for $(a,b)\in k^{\times}\times k^{\times}$ implies that the
  distribution of $K(a,b;k)/\sqrt{|k|}$ (as a measure on $\Rr$) is the
  same as that of $K(a,1;k)/\sqrt{|k|}$. Moreover, since
  $K(a,0;k)=K(0,b;k)=-1$ for~$ab\not=0$, the contribution of the
  eigenvalues with $a=0$ or $b=0$ (but not both) to the limiting
  distribution vanishes.

  Since the measure~$\mu_{\mathrm{sc}}$ is symmetric (i.e., invariant under
  the transformation $x\mapsto -x$ on~$[-2,2]$), it follows that the
  same equidistribution properties hold for the absolute values of the
  sums and their opposite, which give the non-trivial spectrum of the
  Cayley sum graphs for odd characteristic. Precisely, the empirical
  spectral measure for the non-trivial eigenvalues of~$\Gamma_K(k)$ is
  \[
    \frac{1}{|k|^2-1}\sum_{\text{pairs }(a,b)}
    \Bigl(\delta_{K(a,b;k)/\sqrt{|k|}}+
    \delta_{-K(a,b;k)/\sqrt{|k|}}\Bigr),
  \]
  where the sum ranges over pairs $((a,b),(-a-b))$ of non-trivial
  characters, whereas the empirical measure for Kloosterman sums is
  \[
    \frac{1}{|k|^2-1}\sum_{(a,b)} \delta_{K(a,b;k)/\sqrt{|k|}}=
    \frac{2}{|k|^2-1}\sum_{\text{pairs }(a,b)}
    \delta_{K(a,b;k)/\sqrt{|k|}},
  \]

  Since the latter converges to~$\mu_{\mathrm{sc}}$ by Katz's work,
  and since~$\mu_{\mathrm{sc}}$ is symmetric, the same applies to the
  former.

  The analogue of the bound~(\ref{eq-w1}) is proved
  in~\cite{k-u}*{Th.\,4.4} (as spelled-out
  in~\cite{k-u}*{Example\,4.5\,(1)}) for the distribution
  of~$K(a,1;k)/\sqrt{|k|}$ with~$a\in k^{\times}$. By the preceding
  remarks, this implies~(\ref{eq-w1}).

  In the case of Birch sums, Katz proved the equidistribution
  in~\cite{katz-birch}*{Th.\,19,\,Cor.\,20} for
  $a\mapsto B(a,b;k)/\sqrt{|k|}$ with $a$ varying in~$k$ for any fixed
  $b\in k^{\times}$, when~$k$ is restricted to fields of characteristic
  $\geq 7$ (the statement of loc. cit. suggests that it requires $p>7$;
  however, the result is valid for $p=7$ by inspection, the point being
  that the value of~$n$ in~\cite[Th.\,9]{katz-birch}, which is used in
  the proof of~\cite[Th.\,19]{katz-birch}, is the rank~$2$ and not the
  degree~$3$ of the polynomial).

  Since $B(a,0;k)=0$ for $a\in k^{\times}$, the values which are
  excluded do not affect the equidistribution, and the remainder of
  the argument is essentially identical.

  In characteristic~$2$, a case that only occurs for Kloosterman
  sums, the argument is similar but simpler: all characters are real
  characters, so the eigenvalues of the adjacency matrix are exactly the
  Kloosterman sums $K(a,b; k)$, and we can directly apply the results of
  Katz.
\end{proof}

Although this gives a complete proof of Theorem~\ref{th-main}, we
explain in the next section an alternate proof of the equidistribution
theorem in the case of $\Gamma_K(k)$ which is based on more recent ideas
and is more transparent. In fact, this will reveal that the
$K_{2,3}$-freeness of the graphs is closely related to their spectral
properties.

\section{Larsen's alternative, Sidon sets and equidistribution}\label{sec:larsen}

For simplicity, we will restrict our argument to the setting where $k$
varies among the extensions of a fixed base finite field. The starting
point is Deligne's general \emph{a priori} equidistribution theorem,
which relies on Deligne's most general form of the Riemann hypothesis
over finite fields. Specialized to the case of Kloosterman sums, this is
the following statement (most easily deduced from the version
in~\cite{ffk}*{Th.\,4.8}; see also~\cite{deligne}*{\S3.5} for the
original statement, and~\cite{gkm}*{Th.\,3.6} for another version).

\begin{theorem}
  Let~$k$ be a finite field.  There exist an integer~$r\geq 0$ and two
  compact Lie groups $K^g$ and~$K^a$, contained in $\Un_r(\Cc)$ such
  that the following properties hold:
  \begin{enumth}
  \item The subgroup $K^g$ is a normal subgroup of~$K^a$ with
    $K^a/K^g$ abelian.
    
  \item For any continuous and bounded function $f\colon \Cc\to \Cc$, we
    have
    \[
      \lim_{N\to +\infty}\frac{1}{N}\sum_{1\leq n\leq N}
      \frac{1}{|k_n^{\times}|^2}\sum_{a,b\in k_n^{\times}} f\Bigl(
      \frac{1}{\sqrt{|k_n|}}K(a,b;k_n) \Bigr) =\int_{K^a}f(\Tr(x))dx,
    \]
    where~$k_n$ is the extension of degree~$n$ of~$k$ in an algebraic
    closure of~$k$, and the character~$\psi_n=\psi\circ \Tr_{k_n/k}$
    is used to define~$K(a,b;k_n)$.
    
  \item If $K^a=K^g$, then for any continuous and bounded function
    $f\colon \Cc\to \Cc$, we have
    \[
      \lim_{n\to +\infty} \frac{1}{|k_n^{\times}|^2}\sum_{a,b\in
        k_n^{\times}} f\Bigl( \frac{1}{\sqrt{|k_n|}}K(a,b;k_n) \Bigr)
      =\int_{K^a}f(\Tr(x))dx.
    \]
  \end{enumth}

  In these statements, $dx$ denotes the unique probability Haar measure
  on the group~$K^a$.
\end{theorem}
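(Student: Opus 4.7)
The plan is to derive the statement from Deligne's general equidistribution theorem, applied to the Kloosterman sheaf $\KL_2$ of Deligne--Katz. This is a lisse $\bQl$-sheaf of rank $r=2$ on $\Gm$ over $k$, pure of weight one, whose Frobenius trace at a point $a\in k_n^\times$ equals $-K(a,1;k_n)$ (with $\psi_n=\psi\circ\Tr_{k_n/k}$). To accommodate the two parameters, I would pull back $\KL_2$ along the multiplication map $\Gm\times\Gm\to\Gm$, $(a,b)\mapsto ab$; by the identity $K(a,b;k_n)=K(ab,1;k_n)$ on $k_n^\times\times k_n^\times$, the trace function of the pulled-back sheaf at $(a,b)$ is $-K(a,b;k_n)$.

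To this pulled-back sheaf Deligne's formalism attaches geometric and arithmetic monodromy groups $\bfG^g\triangleleft\bfG^a\subset\GL_2$, with $\bfG^a/\bfG^g$ cyclic (generated by the image of arithmetic Frobenius over $k$) and hence abelian. Because the sheaf is pointwise pure of weight one, Deligne's purity theorem allows one to fix an embedding $\bQl\hookrightarrow\Cc$ and to conjugate inside $\GL_2(\Cc)$ so that a maximal compact subgroup $K^a$ of $\bfG^a(\Cc)$ sits inside $\Un_r(\Cc)$ for $r=2$, with $K^g$ (the corresponding compact form of $\bfG^g$) normal in $K^a$. This produces the pair $(K^g,K^a)$ with the structural properties of part~(1).

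The quantitative equidistribution is then the direct output of Deligne's theorem, which rests on the Riemann hypothesis from Weil~II, applied to twists of the sheaf by irreducible representations of $\bfG^a$ in a Chebotarev-type argument. Concretely, for $n$ in a fixed residue class modulo $[K^a:K^g]$, the Frobenius conjugacy classes attached to $k_n^\times\times k_n^\times$-points lie in a single coset $\kappa$ of $K^g$ in $K^a$ and equidistribute in $\kappa$ against the Haar measure supported on $\kappa$, with error $O(|k_n|^{-1/2})$. If $K^a=K^g$ only one coset exists and statement~(3) follows pointwise. Otherwise, Cesàro-averaging over $n\leq N$ samples each coset in its correct proportion, and combining with the within-coset equidistribution reassembles the integral against the Haar measure on all of $K^a$, establishing~(2).

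The main obstacle is the bookkeeping around cosets: verifying both that a Frobenius at $k_n$ lies in the coset of $K^g$ determined by $n$ mod $[K^a:K^g]$, and that the within-coset error term is uniform in $n$. Both facts follow from Deligne's trace function formalism, but require some care to extract cleanly in the desired form; for $\KL_2$ itself, Katz's monograph \cite{gkm} also pins down $\bfG^g=\SL_2$ explicitly, although in the abstract form stated here only the structural features of $(K^g,K^a)$ are needed.
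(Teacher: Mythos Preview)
Your proposal is correct and matches the paper's treatment: the paper does not give its own proof of this theorem but simply cites it as a specialization of Deligne's equidistribution theorem (referring to~\cite{ffk}*{Th.\,4.8}, \cite{deligne}*{\S3.5}, and~\cite{gkm}*{Th.\,3.6}), and your sketch is exactly the content of those references applied to the Kloosterman sheaf. One small remark: the pullback along multiplication $\Gm\times\Gm\to\Gm$ is harmless but unnecessary, since (as you note) $K(a,b;k_n)=K(ab,1;k_n)$ already reduces the two-parameter average to the one-parameter one over~$\Gm$, and pulling back along a smooth surjective map with geometrically connected fibers does not change the monodromy groups.
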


Assuming this deep result, we can deduce the equidistribution theorem of
Katz for Kloosterman sums, provided we can show that $r=2$ and
$K^a=K^g=\SU_2(\Cc)$.  The first assertion that $r=2$ is a standard
consequence of the Grothendieck--Ogg--Shafarevich formula (see, e.g.,~\cite{gkm}*{\S\,2.3.1}). It corresponds also to the
following fundamental property: according to the formalism underlying
Deligne's equidistribution theorem, and the constructions of Katz, it is
known that for all $(a,b)\in (k^{\times})^2$, there exists an
element~$\theta(a,b;k)$ of~$\Un_2(\Cc)$ such that
\[
  \Tr(\theta(a,b;k))=\frac{1}{\sqrt{k}}\sum_{x\in
    k^{\times}}\psi(ax+bx^{-1}). 
\]

This fact explains in particular the bound~$|K(a,b;k)|\leq
2\sqrt{|k|}$, as all elements of $\Un_2(\Cc)$ have trace in $[-2,2]$. Moreover, one proves that the characteristic polynomial of
$\theta(a,b;k)$ is
\[
  X^2-\frac{K(a,b;k)}{\sqrt{|k|}}X+1,
\]
which implies that $\theta(a,b;k)\in\SU_2(\Cc)$, and then one deduces
that $K^a\subset \SU_2(\Cc)$.

\begin{lemma}\label{lm-larsen}
  With notation as above, either $K^a=K^g=\SU_2(\Cc)$ or $K^a$ is a
  finite subgroup of~$\SU_2(\Cc)$ isomorphic to one of $\SL_2(\Ff_3)$,
  $\GL_2(\Ff_3)$ or $\SL_2(\Ff_5)$ embedded in $\SU_2(\Cc)$ by an
  irreducible faithful representation.
\end{lemma}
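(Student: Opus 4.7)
The plan is to apply Larsen's alternative: I will show $\int_{K^a}\Tr(g)^4 dg=2$ and deduce the conclusion from the representation theory of $\SU_2(\Cc)$. The setup is that $K^a$ is already known to be a closed subgroup of $\SU_2(\Cc)$ (from the characteristic polynomial computation preceding the lemma), so the question is one of classification.

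First, I would compute the fourth moment of the normalized Kloosterman sums. Expanding $K(a,b;k)^4$ as a fourfold sum and summing over $(a,b)\in k\times k$, orthogonality of additive characters reduces the computation to counting quadruples $(x_1,x_2,x_3,x_4)\in (k^{\times})^4$ satisfying $(x_1,x_1^{-1})+(x_2,x_2^{-1})=(-x_3,-x_3^{-1})+(-x_4,-x_4^{-1})$, i.e., a Sidon relation among elements of $K(k)$. By the symmetric Sidon property (\cref{prop:symmetric sidon}), there are $2|K(k)|^2+O(|K(k)|)$ such solutions. After subtracting the small contribution of characters with $a=0$ or $b=0$ and dividing by $|K(k)|^2$, the fourth moment of $K(a,b;k)/\sqrt{|k|}$ tends to $2$, and Deligne's equidistribution theorem applied to $z\mapsto z^4$ yields $\int_{K^a}\Tr(g)^4 dg=2$.

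Second, I would translate this into representation theory. By Schur's lemma, $\int_{K^a}\Tr(g)^4 dg=\dim\End_{K^a}(V\otimes V)$, where $V=\Cc^2$ is the standard representation. Since $V\otimes V=\symk^2 V\oplus \wedge^2 V$ with $\wedge^2 V$ the trivial character, the identity $\int_{K^a}\Tr(g)^4 dg=2$ is equivalent to the $K^a$-irreducibility of $\symk^2 V$. Viewing $\symk^2 V$ as the standard representation of $\SO_3(\Cc)=\SU_2(\Cc)/\{\pm I\}$, this amounts to saying that the image of $K^a$ in $\SO_3(\Cc)$ acts irreducibly on $\Cc^3$.

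Third, I would classify such subgroups. A closed subgroup of $\SO_3(\Cc)$ acts irreducibly on $\Cc^3$ if and only if it either equals $\SO_3(\Cc)$ or is finite with Platonic image (rotation group $A_4$, $S_4$, or $A_5$); this follows from inspecting the closed connected subgroups of $\SO_3(\Cc)$ (trivial, a torus, or the full group) together with the classical classification of finite subgroups. Pulling back to $\SU_2(\Cc)$, we get either $K^a=\SU_2(\Cc)$ (and then $K^g=\SU_2(\Cc)$ as well, since the only normal subgroup of $\SU_2(\Cc)$ with abelian quotient is itself), or $K^a$ is one of the binary polyhedral groups, namely the binary tetrahedral group $\cong\SL_2(\Ff_3)$, the binary octahedral group $\cong\GL_2(\Ff_3)$, or the binary icosahedral group $\cong\SL_2(\Ff_5)$, each embedded in $\SU_2(\Cc)$ via its unique faithful irreducible $2$-dimensional representation. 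The main obstacle is the moment computation: verifying the $O(|K(k)|)$ error in the Sidon count and carefully handling the boundary contributions from $(a,b)$ with $a=0$ or $b=0$. The classification step is then a standard consequence of the known structure of finite subgroups of $\SU_2(\Cc)$.
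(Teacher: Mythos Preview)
Your overall strategy matches the paper's: compute the fourth moment $M_4=\int_{K^a}|\Tr(g)|^4\,dg$ via equidistribution and the Sidon property, then classify the closed subgroups of $\SU_2(\Cc)$ with $M_4=2$. However, your moment computation contains two errors that happen to cancel.

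First, the Sidon count is wrong. For a \emph{symmetric} Sidon set $S$ of size $m$ with center~$0$, the number of ordered quadruples $(\alpha,\beta,\gamma,\delta)\in S^4$ with $\alpha+\beta=\gamma+\delta$ is $3m^2+O(m)$, not $2m^2+O(m)$: in addition to the $2m^2+O(m)$ ``trivial'' solutions with $\{\alpha,\beta\}=\{\gamma,\delta\}$, there are $m^2$ further solutions with $\alpha+\beta=\gamma+\delta=0$ (namely $\beta=-\alpha$, $\delta=-\gamma$ arbitrary). So summing $K(a,b;k)^4$ over all $(a,b)\in k\times k$ and normalizing gives a main term of~$3$, not~$2$. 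Second, the boundary contribution is not small: the single term $(a,b)=(0,0)$ has $K(0,0;k)=|k|-1$, so $(K(0,0;k)/\sqrt{|k|})^4\sim |k|^2$, which after division by $|k|^2$ contributes exactly~$1$ to the average. Subtracting it brings $3$ down to~$2$. (The terms with exactly one of $a,b$ zero are genuinely negligible.) The paper handles this by summing over $(a,b)\in(k_n^\times)^2$ and then adding and subtracting the $a=0$ and $b=0$ contributions to the \emph{inner} character sums, obtaining $3-1-1+1=2$; your route via the full sum over $k\times k$ is fine too, but you must keep track of the $(0,0)$ term.

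Your classification step is a valid and arguably more transparent alternative to the paper's. The paper cites Larsen's alternative (Katz) to conclude that $M_4=2$ forces either $K^a=\SU_2(\Cc)$ or $K^a$ finite, and then invokes a further result of Katz that $M_4=2$ implies the $2$-dimensional representation is primitive, whence the binary polyhedral classification. You instead unpack $M_4=2$ as irreducibility of $\symk^2 V$, pass to the image in $\SU_2(\Cc)/\{\pm I\}$, and classify directly. One correction: that quotient is $\SO_3(\Rr)$, not $\SO_3(\Cc)$. Also, to complete the pullback you should note that none of $A_4,S_4,A_5$ has a faithful $2$-dimensional irreducible representation, so $-I\in K^a$ necessarily and $K^a$ is the full preimage, i.e., the corresponding binary group.
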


\begin{proof}
  The fact that $K^a$ is contained in $\SU_2(\Cc)$ has been explained above. Next, by equidistribution again, we can compute
  \[
    M_4=\int_{K^a}\abs{\Tr(x)}^4dx=2
  \]
  using the fact that the sets $K(k_n)$ are symmetric Sidon sets.
  More precisely, it follows from equidistribution that
  \[
    M_4=\lim_{N\to+\infty} \frac{1}{N}\sum_{n\leq N}
    \frac{1}{|k_n^{\times}|^2}\sum_{a,b\in k_n^{\times}}
    \Bigl|\frac{1}{\sqrt{|k_n|}} \sum_{x\in
      k_n^{\times}}\psi_n(ax+bx^{-1}) \Bigr|^4.
  \]

  Expanding the fourth power, the average over $(a,b)$ on the right-hand side, for a given $n\geq 1$, is equal to
  \[
    \frac{1}{|k_n|^2}\sum_{x_1,\ldots,x_4\in k_n^{\times}}
    \frac{1}{|k_n|^{\times}}\sum_{a\in k_n^{\times}}
    \psi_n(a(x_1+x_2-x_4-x_4)) \frac{1}{|k_n|^{\times}} \sum_{b\in
      k_n^{\times}}\psi_n(b(x_1^{-1}+x_2^{-1}-x_3^{-1}-x_4^{-1})).
  \]

  Writing both sums over~$a$ and~$b$ as sums over~$k_n$ minus the
  contributions of~$a=0$ and~$b=0$, we obtain four terms. The first
  one, by orthogonality of characters, is
  \[
    \frac{1}{|k_n|^2}\sum_{\substack{x_1,\ldots,x_4\in
        k_n^{\times}\\x_1+x_2=x_3+x_4\\x_1^{-1}+x_2^{-1}=x_3^{-1}+x_4^{-1}}}
    1,
  \]
  which converges to~$3$ as~$n\to+\infty$ because $K(k_n)$ is a
  symmetric Sidon set.

  Using orthogonality again, the additional contributions are:
  \begin{gather*}
    -\frac{1}{|k_n|^3}
    \sum_{\substack{x_1,\ldots,x_4\in
        k_n^{\times}\\x_1^{-1}+x_2^{-1}=x_3^{-1}+x_4^{-1}}}
    1\quad\quad \text{(from $a=0$)},\\
    -\frac{1}{|k_n|^3}
    \sum_{\substack{x_1,\ldots,x_4\in
        k_n^{\times}\\x_1+x_2=x_3+x_4}}
    1\quad\quad \text{(from $b=0$)},\\
    \frac{1}{|k_n|^4}
    \sum_{x_1,\ldots,x_4\in
      k_n^{\times}}1\quad\quad \text{(from $a=b=0$)}.
  \end{gather*}

  The first two converge to~$-1$ each, and the last converges
  to~$1$. Altogether, we obtain
  \[
    M_4=\lim_{N\to+\infty}\frac{1}{N}\sum_{n\leq N}
    \frac{1}{|k_n^{\times}|^2}\sum_{a,b\in
      k_n^{\times}} \Bigl|\frac{1}{\sqrt{|k_n|}}
    \sum_{x\in k_n^{\times}}\psi_n(ax+bx^{-1})
    \Bigr|^4=3-1-1+1=2.
  \]
  
  By Larsen's alternative (see~\cite{katz-larsen}*{Th.\,1.1.6}
  or~\cite{ffk}*{Th.\,8.5}), the fact that $M_4=2$ implies that either
  $K^a=\SU_2(\Cc)$, or that~$K^a$ is a finite subgroup
  of~$\SU_2(\Cc)$. In the first case, we deduce that in fact
  $K^a=K^g=\SU_2(\Cc)$ because $K^g$ is a normal subgroup of~$K^a$
  with abelian quotient and $\SU_2(\Cc)$ has no such subgroup except
  itself.

  If $K^a$ is finite, then Katz has
  proved~\cite{katz-larsen}*{Th.\,1.3.2} that the condition $M_4=2$
  implies that the representation $K^a\subset \SU_2(\Cc)$ is not induced
  from a representation of a proper subgroup. This means that~$K^a$ is
  an irreducible primitive subgroup of~$\SU_2(\Cc)$. The classification
  of these subgroups is classical (it is a variant of the well-known
  classification of finite subgroups of $\SO_3$), and there are three
  possible groups, often called the binary tetrahedral, binary
  octahedral and binary icosahedral groups.

  To check that these three groups are indeed those we indicated, it is
  enough to check that the orders correspond and that $\SL_2(\Ff_3)$,
  $\GL_2(\Ff_3)$ and $\SL_2(\Ff_5)$ have faithful two-dimensional
  representations, since we know that $\SU_2(\Cc)$ contains no other
  finite primitive subgroup; we refer the reader, e.g,
  to~\cite{leuschke-wiegand}*{Th.\,6.11} for this classification, and
  the other facts are elementary.
\end{proof}

The following finishes the proof of the equidistribution of Kloosterman
sums for the sequence of fields $(k_n)_{n\geq 1}$.

\begin{lemma}
  The group~$K^a$ associated to Kloosterman sums is infinite.
\end{lemma}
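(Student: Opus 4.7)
The plan is to rule out the three finite possibilities for $K^a$ identified in Lemma~\ref{lm-larsen}. The key observation is that if $K^a$ were one of the finite groups $\SL_2(\Ff_3)$, $\GL_2(\Ff_3)$, or $\SL_2(\Ff_5)$ embedded in $\SU_2(\Cc)$, then each of its elements would have finite order, so the corresponding traces would be of the form $2\cos(2\pi j/n)$ for integers $j, n$, and in particular algebraic integers. Concretely, the union of the trace sets of the three groups in their faithful two-dimensional irreducible representations is contained in the finite set
\[
  T = \{0, \pm 1, \pm 2, \pm \sqrt 2, \pm \phi, \pm \phi^{-1}\}, \qquad \phi = (1+\sqrt 5)/2.
\]
If $K^a$ were finite, then every normalized Kloosterman sum $K(a,b;k_n)/\sqrt{|k_n|}$—being the trace of the Frobenius class $\theta(a,b;k_n)\in K^a$—would belong to $T$, and in particular be an algebraic integer.

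The plan is to exhibit, for the base field $k$ at hand, a single explicit normalized Kloosterman sum that fails to be an algebraic integer; this contradicts the finiteness of $K^a$. In characteristic $3$, for instance, a direct computation gives $K(1,1;\Ff_3) = e^{4\pi i/3} + e^{2\pi i/3} = -1$, so the normalized value is $-1/\sqrt 3$; its minimal polynomial over $\Qq$ is $3X^2-1$, which is not monic, whence $-1/\sqrt 3$ is not an algebraic integer. Analogous direct computations of $K(1,1;\Ff_p)/\sqrt p$ in each remaining small characteristic dispose of the other base fields.

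The main obstacle is handling the characteristic of $k$ uniformly, since the argument just sketched proceeds case by case. A more conceptual alternative—following the template of the $M_4$ calculation in Lemma~\ref{lm-larsen}—is to compute a higher trace moment and match it against $\SU_2$. Character-table computations give $M_{12}= 342$, $187$, and $133$ for $\SL_2(\Ff_3)$, $\GL_2(\Ff_3)$, and $\SL_2(\Ff_5)$ respectively, each differing from the $\SU_2$ value $C_6 = 132$, so computing $M_{12}$ on the Kloosterman side and obtaining $132$ would settle all three cases in one stroke. This route is substantially harder combinatorially, though: the symmetric Sidon property of $K(k)$ controls only $4$-fold sums, so the $12$-fold point-counting on subvarieties of $(k^\times)^{12}$ involves delicate leading-order cancellations between the multiset-trivial main term and the corrections from orthogonality.
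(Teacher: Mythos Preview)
Your key observation---that if $K^a$ were finite, every normalized Kloosterman sum $K(a,b;k_n)/\sqrt{|k_n|}$ would be an algebraic integer---is exactly the one the paper uses. The gap is in how you exploit it. Your first route computes $K(1,1;\Ff_3)/\sqrt 3 = -1/\sqrt 3$ and proposes to repeat this ``in each remaining small characteristic'', but the base field $k$ in this section is an \emph{arbitrary} finite field, so there are infinitely many cases, and you give no uniform reason why $K(1,1;k)/\sqrt{|k|}$ should avoid the finite set $T$ for every $k$. Your second route (matching $M_{12}$ against the three finite candidates) is sound in principle, but as you yourself note, the Sidon property gives no handle on the $12$-fold counts, and you do not carry the computation out. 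So as written the proposal does not contain a proof.

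The paper closes the gap with a one-line averaging trick that works uniformly in $k$: rather than examining a single Kloosterman sum, it sums the traces $\Tr(\theta(a,1;k))$ over all $a \in k^\times$. Orthogonality of characters collapses this sum to
\[
  \frac{1}{\sqrt{|k|}}\sum_{a\in k^\times} K(a,1;k) \;=\; \frac{1}{\sqrt{|k|}},
\]
which is never an algebraic integer. Since a finite sum of algebraic integers is again an algebraic integer, this immediately contradicts finiteness of $K^a$. The idea you were missing is precisely this averaging step: it trades an uncontrolled individual value for a closed-form one, and thereby removes any dependence on the characteristic.
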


\begin{proof}
  The key point is that the formalism implies that the matrix
  $\theta(a,b;k)\in \SU_2(\Cc)$ discussed above is in fact in $K^a$ for
  all $(a,b)\in k^{\times}\times k^{\times}$ (and is unique up to
  conjugacy in~$K^a$).  Suppose then that~$K^a$ is finite. The trace of
  $\theta(a,b;k)$, as the trace of a matrix of finite order, is a sum of
  finitely many roots of unity, and hence is an algebraic
  integer. However, taking~$b=1$ and summing over $a\in k^{\times}$, we
  find that
  \[
    \sum_{a\in k^{\times}}\Tr(\theta(a,1;k))=
    \sum_{a\in k}\frac{1}{\sqrt{k}}\sum_{x\in
      k^{\times}}\psi(ax+x^{-1})
    -\frac{1}{\sqrt{|k|}}\sum_{x\in k^{\times}}\psi(x^{-1})=
    \frac{1}{\sqrt{|k|}}
  \]
  by orthogonality of characters.  Since this is not an algebraic
  integer, we have a contradiction.
\end{proof}

\begin{remark}\label{rm-finite}
  It seems difficult to find a similarly elementary argument for the
  case of Birch sums. The reason is that the restriction $p\geq 7$ on
  the characteristic is necessary, and hence any proof must
  necessarily involve this assumption in some way. More precisely,
  Katz proved (see~\cite[3.8.4]{mmp}
  or~\cite[Th.\,4.1,\,4.2]{katz-finite}) that for the Birch sums
  associated to fields of characteristic~$2$ or~$5$, the group~$K^a$
  is finite. This is depicted in \cref{fig:char 25}.
\end{remark}
\begin{figure}[ht]
  \includegraphics[width=.8\textwidth]{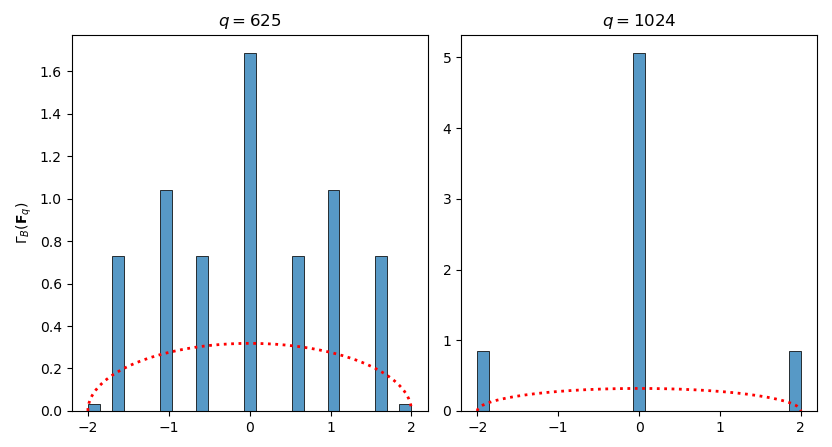}
  \caption{The spectrum of the graphs $\Gamma_B(\Ff_q)$ for $q = 625$ and $q=1024$. As predicted by Katz's theorem, the limiting spectrum is a finitely-supported atomic measure because the characteristic is less than $7$.}\label{fig:char 25}
\end{figure}

\section{Variants}\label{sec:subsets}

We describe here two types of variants, both obtained by considering
graphs with edges given by subsets of the hyperbola $K(k)$. In the first,
we present an \emph{uncountable} family of possible limiting spectral
distributions for $K_{2,3}$-free graphs of essentially optimal
density, as well as for $C_4$-free graphs. In the second case, another
example of $C_4$-free graphs is given.

We will use a simple observation in both cases.

\begin{lemma}\label{lem:desymmetrize}
  Let~$G$ be a finite abelian group and~$S \subset G$ a
  partial symmetric Sidon set with center~$0$. If~$T\subset G$ is a
  subset such that $T\cap (-T)=\emptyset$, then $S\cap T$ is a Sidon
  set.
\end{lemma}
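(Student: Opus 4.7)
The plan is to unpack the two definitions and observe that the antisymmetry condition on $T$ is tailored precisely to kill the one exceptional case in the definition of a partial symmetric Sidon set. Concretely, let $S' = S \cap T$, and suppose we are given a quadruple $(\alpha,\beta,\gamma,\delta) \in (S')^4$ satisfying $\alpha + \beta = \gamma + \delta$. Our task is to show that $\alpha \in \{\gamma,\delta\}$, which is the Sidon condition~\eqref{eq-sidon}.

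Since $S' \subset S$, we may apply the partial symmetric Sidon property of $S$ (with center~$0$) to this quadruple. There are two possibilities: either $\alpha \in \{\gamma,\delta\}$, in which case we are done, or $\alpha + \beta = \gamma + \delta = 0$. I expect the entire content of the lemma to lie in ruling out this second case using the hypothesis $T \cap (-T) = \emptyset$. In that case, we would have $\beta = -\alpha$; but by construction $\alpha \in T$ and $\beta = -\alpha \in T$, so $\alpha \in T \cap (-T)$, contradicting the assumption that this intersection is empty. Therefore only the first case can occur, and $S \cap T$ is a Sidon set.

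There is no serious obstacle here: the argument is essentially a direct verification, and the only thing to notice is that the definition of a partial symmetric Sidon set with center~$0$ forces the exceptional solutions to satisfy $\beta = -\alpha$, a configuration explicitly forbidden by the antisymmetry of~$T$. In particular, no issue arises from the edge case $\alpha = 0$, since the condition $T \cap (-T) = \emptyset$ already implies $0 \notin T$, so such an $\alpha$ cannot even belong to $S'$.
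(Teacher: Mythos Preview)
Your proof is correct and is precisely the unpacking of what the paper means by ``immediate from the definitions'': the exceptional case $\alpha+\beta=0$ in the partial symmetric Sidon condition forces $\beta=-\alpha$, which is excluded by $T\cap(-T)=\emptyset$. There is nothing to add.
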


\begin{proof}
  This is immediate from the definitions.
\end{proof}

Given a complex-valued random variable~$X$, we will denote
by~$\eps|X|$ the product of~$|X|$ with a random
variable~$\eps$ independent of~$X$ and taking values~$-1$ and~$1$ each
with probability~$1/2$. We will also use the additive
character of~$\Ff_p$ defined by
\[
\psi_p(x)=e(x/p),\quad \text{where}\quad e(z)=e^{2i\pi z}\quad \text{for}\quad z\in\Cc.
\]

Our first statement is the following.  

\begin{theorem}\label{th-klt}
  Let~$t$ be a real number with $0<t\leq 1$.  Let
  \[
    K_t(p)=\{(x,y)\in \Ff_p\times \Ff_p\,\mid\, xy=1\text{ and } 1\leq
    x\leq t(p-1)\}, 
  \]
  where we identify $\Ff_p$ with the integers $\{0,\ldots, p-1\}$.

  \begin{enumth}
  \item The graph $\Gamma(\Ff_p\times\Ff_p,K_t(p))$ is $K_{2,3}$-free,
    and $C_4$-free if~$0<t\leq 1/2$.

  \item All the non-trivial eigenvalues~$\lambda$ of the adjacency
    matrix of~$\Gamma(\Ff_p\times\Ff_p,K_t(p))$ satisfy
    \[
      |\lambda|=O(p^{1/2}\log p),
    \]
    where the implied constant is absolute, and in particular,
     independent of~$t$.
    
  \item As $p\to +\infty$ among primes, the numbers
    \[
      \Bigl\{\frac{\lambda}{\sqrt{|K_t(p)|}}\,\mid\, \lambda\text{ an
        eigenvalue of }\Gamma(\Ff_p\times\Ff_p,K_t(p))\Bigr\}
    \]
    converge in distribution to the distribution of
    \[
      \eps\Bigl|t^{1/2}\mathrm{SC}_0+\frac{1}{\sqrt{t}}
      \sum_{\substack{h\in\Zz\\h\not=0}}
      \frac{e(ht)-1}{2i\pi h}\mathrm{SC}_h\Bigr|, 
    \]
    where $(\mathrm{SC}_h)_{h\in\Zz}$ are independent
    semicircle-distributed random variables and the random series
    converges almost surely when taken as a limit of symmetric partial
    sums. 
  \end{enumth}
\end{theorem}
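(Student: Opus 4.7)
For the subgraph-containment assertion~(1), I observe that $K_t(p) \subset K(\Ff_p)$, which is a symmetric Sidon set with center~$0$ by Proposition~\ref{prop:symmetric sidon}. The partial-symmetric-Sidon property with a given center is inherited by subsets, so $K_t(p)$ is itself a partial symmetric Sidon set with center~$0$, and Proposition~\ref{prop:K23}(2) gives the $K_{2,3}$-freeness. When $t \leq 1/2$, identifying $\Ff_p$ with $\{0, \ldots, p-1\}$ so that $-x \equiv p-x \pmod p$, the first coordinates of the elements of $-K_t(p)$ all lie in the range $[p - \lfloor t(p-1)\rfloor, p-1]$, disjoint from $\{1, \ldots, \lfloor t(p-1)\rfloor\}$. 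Therefore the set $T = \{(x,y) : 1 \leq x \leq t(p-1)\}$ satisfies $T \cap (-T) = \emptyset$, and Lemma~\ref{lem:desymmetrize} yields that $K_t(p) = K(\Ff_p) \cap T$ is a Sidon set, whence $C_4$-freeness by Proposition~\ref{prop:K23}(1).

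By Proposition~\ref{pr-1}, the non-trivial eigenvalues of the graph are of the form $\pm |S(a,b)|$ for $(a,b) \in \Ff_p \times \Ff_p \setminus \{(0,0)\}$, where
\[
  S(a,b) = \sum_{1 \leq x \leq t(p-1)} \psi_p(ax + bx^{-1})
\]
is an incomplete Kloosterman sum. The classical completion technique---Fourier expansion of the indicator function of $[1, N]$ (with $N = \lfloor t(p-1) \rfloor$) in~$\Ff_p$---gives the identity
\[
  S(a,b) = \sum_{h \in \Ff_p} c_h(t,p) \, K(a + h, b; \Ff_p),
\]
where $c_0(t,p) = N/p$ and $|c_h(t,p)| \leq \min(N/p,\, 1/(2|h|))$ for $h \in [-(p-1)/2, (p-1)/2] \setminus \{0\}$, so $\sum_h |c_h(t,p)| \ll \log p$. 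For the generic characters with $b \neq 0$ this yields $|S(a,b)| \ll \sqrt{p}\,\log p$ via the Weil bound $|K(a+h,b;\Ff_p)| \leq 2\sqrt p$ recalled in the proof of Theorem~\ref{th-main}; the $O(p)$ degenerate characters with $b = 0$ are handled separately by elementary estimates on incomplete geometric sums, giving assertion~(2). Direct evaluation of the geometric sum defining $c_h(t,p)$ moreover gives pointwise convergence $c_h(t,p) \to \hat{t}_h$ as $p \to \infty$ for each fixed integer~$h$, with $\hat{t}_0 = t$ and $|\hat{t}_h| = |\sin(\pi h t)|/(\pi |h|)$ for $h \neq 0$, matching (up to the harmless symmetry $h \leftrightarrow -h$) the Fourier coefficients appearing in the target distribution.

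To deduce~(3), the key additional input is a \emph{joint} equidistribution theorem: for any fixed finite set $H \subset \Zz$, as $(a,b)$ varies in $\Ff_p^\times \times \Ff_p^\times$ and $p \to \infty$, the normalized tuple $(K(a+h, b; \Ff_p)/\sqrt p)_{h \in H}$ becomes equidistributed according to the law of a family $(\mathrm{SC}_h)_{h \in H}$ of independent semicircle random variables. This can be extracted from the results of Katz in~\cite{gkm} by a monodromy argument: the shifted Kloosterman sheaves $\KL_2[+h]$ for $h \in H$ are pairwise geometrically non-isomorphic (even up to twists), so that by a Goursat--Kolchin--Ribet style argument the combined geometric monodromy is the full product $\SL_2(\Cc)^H$. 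Normalizing $S(a,b)$ by $\sqrt{|K_t(p)|} \sim \sqrt{tp}$, substituting into the completion identity and passing to the limit then yields convergence in distribution of $S(a,b)/\sqrt{|K_t(p)|}$ to $\sqrt{t}\,\mathrm{SC}_0 + t^{-1/2} \sum_{h \neq 0} \hat{t}_h\, \mathrm{SC}_h$. The series converges almost surely in the symmetric-partial-sum sense by Kolmogorov's two-series theorem, since $\sum_h |\hat{t}_h|^2 < \infty$. The factor~$\eps$ and the outer absolute value in the theorem statement then arise from Proposition~\ref{pr-1}, which symmetrises eigenvalues into $\pm |S(a,b)|$, while the $O(p)$ degenerate characters with $a = 0$ or $b = 0$ are negligible in the limiting empirical distribution, contributing only $O(1/p)$ of the total mass. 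The chief technical obstacle is matching the truncation level $M(p) \to \infty$ in $h$ with Katz's finite-dimensional equidistribution: one needs $M(p)$ to grow slowly enough that joint equidistribution holds uniformly for all $|h| \leq M(p)$, while the tail $\sum_{|h| > M(p)} |c_h(t,p)|^2$ remains small uniformly in~$p$.
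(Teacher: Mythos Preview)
Your treatment of (1) and (2) matches the paper's: a subset of a symmetric Sidon set is a partial symmetric Sidon set, Lemma~\ref{lem:desymmetrize} gives $C_4$-freeness when $t\leq 1/2$, and the completion method gives the eigenvalue bound. One caveat: your claim that the $b=0$ characters are ``handled by elementary estimates on incomplete geometric sums'' does not actually yield $O(\sqrt p\log p)$---for instance $|S(1,0)|=\bigl|\sum_{1\leq x\leq N}\psi_p(x)\bigr|\asymp p\,|\sin(\pi t)|/\pi$. The paper's one-line proof glosses over this too, so part~(2) as literally stated fails for a bounded number of degenerate eigenvalues; this is harmless for~(3), where those characters carry negligible mass.

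For~(3) the paper and you diverge. The paper simply invokes Theorem~1.1(1) of Kowalski--Sawin~\cite{paths}, which states directly that $p^{-1/2}\sum_{1\leq x\leq (p-1)t}\psi_p(ax+bx^{-1})$ converges in law to the random series $t\,\mathrm{SC}_0+\sum_{h\neq 0}\frac{e(ht)-1}{2i\pi h}\,\mathrm{SC}_h$, and then renormalizes by $\sqrt t$ and applies Proposition~\ref{pr-1}. What you outline---completion into shifted Kloosterman sums, joint equidistribution of the family $(\KL_2[+h])_{h\in H}$ via a Goursat--Kolchin--Ribet product-monodromy argument, then a truncation in~$h$---is precisely the strategy underlying~\cite{paths}. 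Your sketch is correct in outline, and you rightly flag the truncation (matching the growth of the frequency cutoff against the uniformity in Deligne's equidistribution) as the crux; carrying it out rigorously is the main analytic content of that paper and is not an immediate consequence of~\cite{gkm} alone. So your route reproves a cited theorem from first principles, while the paper's is a one-line citation; both are valid, but you should be aware that the ``chief technical obstacle'' you name is genuinely nontrivial and already packaged in~\cite{paths}.
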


\begin{proof}
  The first assertion follows from the fact that~$K_t(p)$ is a partial
  symmetric Sidon set, and from \cref{lem:desymmetrize}, applied with
  $G=\Ff_p\times\Ff_p$, $S=K_{t}(p)$ with $0<t\leq 1/2$ and
  \[
    T=\Bigl\{(x,y)\in G\,\mid\, 1\leq x\leq \frac{p-1}{2}\Bigr\}.
  \]

  The second statement follows from the completion method
  for exponential sums (see, e.g.,~\cite{ik}*{\S12.2}), applied to
  \[
    \sum_{1\leq x\leq (p-1)t}\psi_p(ax+bx^{-1}).
  \]

  According to a result of Kowalski and
  Sawin~\cite{paths}*{Th.\,1.1\,(1)}, the normalized sums
  \[
    \frac{1}{\sqrt{p}}\sum_{1\leq x\leq (p-1)t}\psi_p(ax+bx^{-1})
  \]
  converge in distribution as $p\to+\infty$ to the sum
  \[
    t\mathrm{SC}_0+\sum_{\substack{h\in\Zz\\h\not=0}}
    \frac{e(ht)-1}{2i\pi h}\mathrm{SC}_h,
  \]
  and we deduce the third statement from this and
  Proposition~\ref{pr-1}, since~$|K_t(p)|\sim (p-1)t$.
\end{proof}

The second statement uses a different set~$T$ to obtain $C_4$-free
graphs.

\begin{theorem}\label{th-sidon}
  Let~$p$ be a prime number with $p\equiv 3\mods{4}$.
  Set
  \[
    K_+(p)=\{(x,y)\in \Ff_p\times \Ff_p\,\mid\, xy=1\text{ and }
    \Bigl(\frac{x}{p}\Bigr)=1\},
  \]
  where $(\tfrac{x}{p})$ is the Legendre symbol.

  \begin{enumth}
  \item The graph $\Gamma(\Ff_p\times\Ff_p,K_+(p))$ is a regular graph
    of degree $(p-1)/2$, and it is $C_4$-free.

  \item All the non-trivial eigenvalues~$\lambda$ of the adjacency
    matrix of~$\Gamma(\Ff_p\times\Ff_p,K_+(p))$ satisfy
    \[
      |\lambda|\leq 2p^{1/2}.
    \]
    
  \item As $p\to +\infty$ among primes that are~$\equiv 3\mods{4}$,
    the numbers
    \[
      \Bigl\{\frac{\lambda}{\sqrt{|K_+(p)|}}\,\mid\, \lambda\text{ an
        eigenvalue of }\Gamma(\Ff_p\times\Ff_p,K_+(p))\Bigr\}
    \]
    converge in distribution to the distribution of $\eps |SC+SA|$, where $(SA, SC)$ are
    independent random variables such that $SC$ is semicircle-distributed and $SA$ has distribution
    \[
      \mu_{\mathrm{sa}}=\frac{1}{2}\delta_0+\frac{1}{2}(x\longmapsto
      2i\cos(4\pi x))_*dx,
    \]
    with $dx$ the Lebesgue measure on~$[0,1]$.
  \end{enumth}
\end{theorem}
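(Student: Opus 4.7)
My plan combines the combinatorial framework of \cref{sec:subsets} with a direct Fourier-analytic evaluation of the eigenvalues. Part (1) follows from \cref{prop:symmetric sidon} together with \cref{lem:desymmetrize}: the hypothesis $p \equiv 3 \pmod 4$ ensures that $-1$ is a quadratic non-residue mod $p$, so the subset $T = \{(x,y) \in \Ff_p \times \Ff_p : (\tfrac{x}{p}) = 1\}$ satisfies $T \cap (-T) = \emptyset$. Hence $K_+(p) = K(p) \cap T$ is a genuine (not merely partial symmetric) Sidon set and $\Gamma(\Ff_p \times \Ff_p, K_+(p))$ is $C_4$-free by \cref{prop:K23}(1). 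The degree is visibly $|K_+(p)| = (p-1)/2$, the number of non-zero squares in $\Ff_p$.

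For parts (2) and (3) I would apply \cref{pr-1} and decompose the relevant character sum using $\mathbf{1}_{(x/p)=1} = \tfrac{1}{2}(1 + (\tfrac{x}{p}))$, which yields
\[
\sum_{x \in \Ff_p^\times,\,(x/p)=1} \psi_p(ax + b/x) = \tfrac{1}{2}\bigl(K(a,b;p) + S(a,b;p)\bigr),
\]
where $S(a,b;p) = \sum_{x \in \Ff_p^\times} (\tfrac{x}{p}) \psi_p(ax + b/x)$ is the classical Salié sum. The congruence $p \equiv 3 \pmod 4$ forces $S(a,b;p) \in i\Rr$ via the factor $\varepsilon_p = i$ in Salié's explicit evaluation, so $|K+S|^2 = K^2 + |S|^2$. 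Combined with Weil's bound $|K(a,b;p)| \leq 2\sqrt p$ and the analogous bound $|S(a,b;p)| \leq 2\sqrt p$ (which is immediate from Salié's closed form $S(1,c;p) = 2i\sqrt p \cos(4\pi y/p)$ for $c = y^2$), this yields the eigenvalue bound $|\lambda| = \tfrac{1}{2}|K+S| \leq \sqrt{2p} \leq 2\sqrt p$ in part (2).

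For part (3), the first observation is that both $K(a,b;p) = K(ab,1;p)$ and $|S(a,b;p)| = |S(1,ab;p)|$ depend on $(a,b)$ only through $c = ab \in \Ff_p^\times$, so the problem reduces to the joint limit of $(K(c,1;p)/\sqrt p,\, S(1,c;p)/\sqrt p)$ as $c$ is uniformly distributed in $\Ff_p^\times$. Salié's explicit formula shows that $S(1,c;p)/\sqrt p = 0$ if $(\tfrac{c}{p}) = -1$ and $S(1,c;p)/\sqrt p = 2i\cos(4\pi y/p)$ with $y^2 \equiv c$ otherwise; combined with the equidistribution of $y/p$ in $[0,1]$ and the fact that half of $\Ff_p^\times$ consists of residues, this produces the factor $SA \sim \mu_{\mathrm{sa}}$. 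Katz's theorem produces the factor $SC \sim \mu_{\mathrm{sc}}$. The main obstacle is establishing the \emph{joint} convergence to the product $SC \otimes SA$ with independent factors. I would attack this by applying Deligne's equidistribution theorem to the Kloosterman sheaf on $\Gm$ tensored with the Kummer sheaf $\mathcal{L}_{\chi_2}$ (which encodes the Legendre symbol $(\tfrac{c}{p})$) and with the pullback along $y \mapsto y^2$ of Artin--Schreier sheaves $\mathcal{L}_{\psi_p(\pm 2 y)}$ (which encodes the Salié value on the residues), then invoking Katz's monodromy computation in \cite{gkm} to verify that the combined geometric monodromy retains an $\SU_2$ factor uncorrelated with the rank-one twists. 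This decouples $K(c,1;p)$ from $S(1,c;p)$ in the limit; feeding the joint distribution back into \cref{pr-1} then produces the claimed law $\epsilon|SC+SA|$, up to the normalization constant arising from $|K_+(p)| \sim p/2$.
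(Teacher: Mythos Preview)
Your proposal is correct and follows essentially the same route as the paper: parts~(1) and~(2) are argued identically (your observation that $K\in\Rr$ while $T\in i\Rr$ for $ab\neq 0$ even yields the sharper constant $\sqrt{2p}$ in place of the paper's triangle-inequality bound $2\sqrt{p}$), and for part~(3) the paper carries out your independence step concretely via the method of moments---expanding powers of the Sali\'e sum through its explicit formula, pulling back to the $y$-line via $c=y^2$, and reducing to the quasi-orthogonality of the symmetric powers of $[y\mapsto y^2]^*\mathcal{K}\ell$ against the Artin--Schreier sheaves~$\mathcal{L}_{\psi(\gamma y)}$. This is precisely the ``$\SU_2$ uncorrelated with rank-one twists'' decoupling you sketch, executed moment-by-moment rather than packaged as a single monodromy statement.
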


\begin{proof}
  To prove (1), we apply Lemma \ref{lem:desymmetrize} to
  $G=\Ff_p\times\Ff_p$, $S=K_+(p)$ and
  \[
    T=\{(x,y)\in\Ff_p\times\Ff_p\,\mid\, (\tfrac{x}{p})=1\}
  \]
  (using the fact that~$-1$ is not a square modulo~$p$).

  The characteristic function of~$K_+(p)$ inside $\Ff_p^{\times}$ is the function
  \[
    x\longmapsto \frac{1}{2}\Bigl(1+\Bigl(\frac{x}{p}\Bigr)\Bigr),
  \]
  and therefore the eigenvalues of the adjacency matrix are determined
  by the sums
  \[
    \frac{1}{2}\sum_{x\in \Ff_p^{\times}}\psi_p(ax+bx^{-1})+
    \frac{1}{2}\sum_{x\in
      \Ff_p^{\times}}\Bigl(\frac{x}{p}\Bigr)\psi_p(ax+bx^{-1})=
    \frac{1}{2}K(a,b;p)+\frac{1}{2}T(a,b;p),
  \]
  where the sums $T(a,b; p)$ are \emph{Salié sums}, and we write
  $K(a,b;p)$ instead of $K(a,b;\Ff_p)$ for convenience.

  As already mentioned, the Kloosterman sums are $\leq 2\sqrt{p}$ in
  absolute value, and the work of Katz (which was already used to prove
  Theorem~\ref{th-main}) shows that the (normalized) Kloosterman sums
  have semicircle limiting distribution.

  The case of Salié sums is also known but proceeds somewhat
  differently, because the ``monodromy group'' for Salié sums modulo~$p$
  (which governs the equidistribution properties for extensions
  of~$\Ff_p$ by Deligne's equidistribution theorem) is a finite group
  which depends on~$p$, in contrast with Kloosterman sums where it is $\SU_2(\Cc)$ for all~$p$.

  We use instead that Salié sums can be evaluated
  elementarily. Precisely, the formulas
  \begin{equation}\label{eq-salie}
  \begin{aligned}
    T(a,0;p)&=T(0,b;p)=-1\text{ if $a$, $b$} \not=0,\\
    T(a,b;p)&=
    \begin{cases}
      0&\text{ if } (\tfrac{ab}{p})=-1,\\
      2i\sqrt{p}(\frac{b}{p})\cos(4\pi y/p)&\text{ if } ab=y^2\not=0
    \end{cases}
  \end{aligned}
  \end{equation}
  hold for $p\equiv 3\mods{4}$ (see, e.g.,~\cite{iwaniec}*{Lemma\,4.9};
  the condition $p\equiv 3\mods{4}$ is used to evaluate the quadratic
  Gauss sum modulo~$p$).

  It follows\footnote{\ This can also be deduced from the general Weil
    bound.} first that $|T(a,b;p)|\leq 2\sqrt{p}$ for
  $(a,b)\not=(0,0)$, so that combined with the Weil bound, we conclude
  that the eigenvalues of the adjacency matrix have modulus
  $\leq 2\sqrt{p}$.

  We next claim that the normalized Salié sums $T(a,b;p)/\sqrt{p}$
  become $\mu_{\mathrm{sa}}$-equidistributed as~$p\to+\infty$ with
  $p\equiv 3\mods{4}$.  To see this, we write the ``empirical
  measure'' for the Salié sums $T(a,b;p)$ with $(a,b)\not=(0,0)$
  in the form
  \[
    \frac{1}{p^2-1}\sum_{(a,b)\not=(0,0)}\delta_{T(a,b;p)/\sqrt{p}}=
    \frac{1}{p^2-1}\sum_{ab=0}\delta_{T(a,b;p)/\sqrt{p}}+\mu_{1,p}.
  \]
  The first term converges weakly to the zero measure as
  $p\to+\infty$. On the other hand, according to~(\ref{eq-salie}), we
  get
  \[
    \mu_{1,p}=\frac{1}{p^2-1}\Bigl(
    \sum_{\substack{a,b\\(ab/p)=-1}}\delta_0+
    \sum_{\substack{a,b\\(ab/p)=1}}\delta_{2i(b/p)\cos(4\pi
      y/p)}\Bigr),
  \]
  where~$y$ denotes a square root of~$ab$ in~$\Ff_p^{\times}$. The
  first term in this sum converges to $\demi \delta_0$, since there
  are $(p-1)^2/2$ values of $(a,b)$ with $(\tfrac{ab}{p})=-1$. We
  express the second as
  \[
    \frac{1}{2(p^2-1)}\sum_{y\in\Ff_p^{\times}}
    \sum_{b\in\Ff_p^{\times}} \delta_{2i(b/p)\cos(4\pi y/p)}.
  \]

  Let~$f(x)=2i\cos(4\pi x)$ for $x\in [0,1]$.  Since~$y$ and~$b$ are
  independent variables, and since the~$y/p$ (resp.\,the Legendre symbols
  $(b/p)$) become uniformly distributed on $[0,1]$ (resp. become
  uniformly distributed on $\{-1,1\}$), the previous expression
  converges to the measure
  \[
    \frac{1}{2}\Bigl(\frac{1}{2}f_*dx+\frac{1}{2}(-f)_*dx\Bigr)
  \]
  as $p\to +\infty$, where $dx$ denotes the Lebesgue measure
  on~$[0,1]$. But the measures $f_*dx$ and~$(-f)_*dx$ are equal, and
  therefore the limit is $\demi f_*dx$. This confirms that the measure $\mu_{\mathrm{sa}}$ is the limiting
  measure for Salié sums.

  It remains to prove that the sets of pairs
  \[
    \Bigl(\frac{K(a,b;p)}{\sqrt{p}},\frac{T(a,b;p)}{\sqrt{p}}\Bigr)
  \]
  of normalized Kloosterman sums and Salié sums converge to a pair of
  \emph{independent} random variables. This is a problem of a relatively
  familiar kind, but this case does not seem to have been recorded in
  the literature.

  We restrict our attention to
  $(a,b)\in\Ff_p^{\times}\times \Ff_p^{\times}$, as we may for the
  same reasons as before.  By the method of moments (applicable
  because all measures here are compactly supported), it then suffices
  to prove that for all integers~$\alpha\geq 0$ and~$\beta\geq 0$, we
  have the limit
  \begin{equation}\label{eq-limit}
    \lim_{\substack{p\to+\infty\\p\equiv 3\mods{4}}} \frac{1}{(p-1)^2}
    \sum_{a,b\in\Ff_p^{\times}}
    U_{\alpha}\Bigl(\frac{K(a,b;p)}{\sqrt{p}}\Bigr)
    \Bigl(\frac{T(a,b;p)}{\sqrt{p}}\Bigr)^{\beta}=
    \expect(U_{\alpha}(SC))\expect(SA^{\beta}),
  \end{equation}
  where~$U_{\alpha}$ is the Chebychev polynomial corresponding to the
  character of the symmetric $\alpha$-th power representation
  of~$\SU_2(\Cc)$, so that
  \[
    \expect(U_{\alpha}(SC))=\begin{cases}
      1&\text{ if } \alpha=0,\\
      0&\text{ otherwise.}
    \end{cases}
  \]
         
  The distribution results of Kloosterman and Salié sums individually
  show that the formula holds if either~$\alpha=0$ or~$\beta=0$. Thus,
  we may assume that both $\alpha$ and~$\beta$ are non-zero, and in
  particular, the limit of the quantity in~(\ref{eq-limit}) should
  be~$0$.

  By~(\ref{eq-salie}), we obtain the expression
  \[
    \sum_{a,b\in\Ff_p^{\times}}
    U_{\alpha}\Bigl(\frac{K(a,b;p)}{\sqrt{p}}\Bigr)
    \Bigl(\frac{T(a,b;p)}{\sqrt{p}}\Bigr)^{\beta} =(2i)^{\beta}
    \sum_{(ab/p)=1} U_{\alpha}\Bigl(\frac{K(a,b;p)}{\sqrt{p}}\Bigr)
    \Bigl(\frac{b}{p}\Bigr)^{\beta}\cos\Bigl(\frac{4\pi
      y}{p}\Bigr)^{\beta},
  \]
  where again~$y$ denotes one square root of $ab$ modulo~$p$. Using
  the formula $K(a,b;p)=K(ab,1;p)$ and putting~$x=ab$, this (up to the
  factor $(2i)^{\beta}$) becomes
  \[
    \sum_{(x/p)=1} U_{\alpha}\Bigl(\frac{K(x,1;p)}{\sqrt{p}}\Bigr)
    \cos\Bigl(\frac{4\pi y}{p}\Bigr)^{\beta} \sum_{ab=x}
    \Bigl(\frac{b}{p}\Bigr)^{\beta},
  \]
  where~$y$ is now a square root of~$x$ modulo~$p$. The inner sum
  vanishes if~$\beta$ is odd, so that the limiting formula certainly
  holds in this case. We thus assume that~$\beta$ is even, in which case
  the inner sum is always equal to~$p-1$, so that the average
  in~(\ref{eq-limit}) is given by 
  \begin{align*}
    \frac{1}{(p-1)^2} \sum_{a,b\in\Ff_p^{\times}}
    U_{\alpha}\Bigl(\frac{K(a,b;p)}{\sqrt{p}}\Bigr)
    \Bigl(\frac{T(a,b;p)}{\sqrt{p}}\Bigr)^{\beta}
    &
      =\frac{1}{p-1}
      \sum_{(x/p)=1} U_{\alpha}\Bigl(\frac{K(x,1;p)}{\sqrt{p}}\Bigr)
      \cos\Bigl(\frac{4\pi y}{p}\Bigr)^{\beta}
    \\
    &=
    \frac{1}{2(p-1)}
    \sum_{y\in\Ff_p^{\times}} U_{\alpha}\Bigl(\frac{K(y^2,1;p)}{\sqrt{p}}\Bigr)
    \cos\Bigl(\frac{4\pi y}{p}\Bigr)^{\beta}
  \end{align*}
  (where~$y$ is a square-root of~$x$ in the middle step).

  The last transformation is to expand the cosine in complex
  exponentials, namely
  \[
    \sum_{y\in\Ff_p^{\times}}
    U_{\alpha}\Bigl(\frac{K(y^2,1;p)}{\sqrt{p}}\Bigr)
    \cos\Bigl(\frac{4\pi y}{p}\Bigr)^{\beta}= \frac{1}{2^{\beta}}
    \sum_{0\leq j\leq \beta}\binom{\beta}{j} \sum_{y\in\Ff_p^{\times}}
    U_{\alpha}\Bigl(\frac{K(y^2,1;p)}{\sqrt{p}}\Bigr)
    e\Bigl(\frac{2(2j-\beta)y}{p}\Bigr).
  \]

  We are now in a position to simply apply the (by now) fairly
  standard Lemma~\ref{lm-rh} below to conclude the proof.
\end{proof}

\begin{lemma}\label{lm-rh}
  For $\alpha\not=0$ and for all integers~$\gamma$, the estimate 
  \[
    \sum_{y\in\Ff_p^{\times}}
    U_{\alpha}\Bigl(\frac{K(y^2,1;p)}{\sqrt{p}}\Bigr)
    e\Bigl(\frac{\gamma y}{p}\Bigr)= O(p^{1/2})
  \]
  holds for all primes~$p$, where the implied constant depends only
  on~$\alpha$.
\end{lemma}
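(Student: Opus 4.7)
The plan is to interpret the sum as a sum of trace functions on $\Gm/\Ff_p$ and to apply the Grothendieck--Lefschetz trace formula together with Deligne's Weil II. Let $\KL_2$ denote the Kloosterman sheaf on $\Gm/\Ff_p$: a rank~$2$ lisse $\bQl$-sheaf, pure of weight~$0$, whose trace function at $t\in\Ff_p^\times$ is (up to sign) $K(t,1;p)/\sqrt{p}$, and whose geometric monodromy group is $\SL_2$ by Katz~\cite{gkm}. We will work with the lisse sheaf $\mcG = [2]^*\symk^\alpha\KL_2\otimes \mcL_{\psi(\gamma y)}$ on $\Gm/\Ff_p$, where $[2]\colon y\mapsto y^2$ is the squaring map (a finite étale cover since $p$ is odd) and $\mcL_{\psi(\gamma y)}$ is the Artin--Schreier sheaf attached to $y\mapsto e(\gamma y/p)$. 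This sheaf $\mcG$ is lisse of rank $\alpha+1$, pure of weight~$0$, and its trace function at $y\in\Ff_p^\times$ is $U_\alpha(K(y^2,1;p)/\sqrt{p})\,e(\gamma y/p)$ up to a sign that can be absorbed.

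By the Grothendieck--Lefschetz trace formula, the sum in the lemma equals $\sum_i(-1)^i\Tr(\Frob_p\mid H^i_c(\Gm\times\bFp,\mcG))$. Now $H^0_c=0$ since $\Gm$ is an affine curve and $\mcG$ is lisse, and by Poincaré duality $H^2_c$ vanishes as soon as $\mcG$ has no geometric invariants. Since $\SL_2$ has no subgroup of index~$2$, the pullback $[2]^*\KL_2$ still has geometric monodromy $\SL_2$, and so $[2]^*\symk^\alpha\KL_2$ is geometrically irreducible of dimension $\alpha+1\geq 2$ (using $\alpha\not=0$); tensoring with the rank~$1$ sheaf $\mcL_{\psi(\gamma y)}$ preserves irreducibility, so $\mcG$ is geometrically irreducible and non-trivial, and in particular has no invariants.

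By Deligne's Weil II, the eigenvalues of $\Frob_p$ on $H^1_c(\Gm\times\bFp,\mcG)$ have absolute value at most $\sqrt{p}$, so the sum in the lemma is bounded by $\dim H^1_c\cdot\sqrt{p}$. It remains to show that $\dim H^1_c=-\chi_c(\Gm\times\bFp,\mcG)$ is bounded by a constant depending only on $\alpha$. By the Grothendieck--Ogg--Shafarevich formula, this is controlled by the Swan conductors of~$\mcG$ at $0$ and~$\infty$: the sheaf $\KL_2$ is tame at~$0$, so $\mcG$ is tame there; at $\infty$, the unique break of $\KL_2$ is $1/2$, so $[2]^*\symk^\alpha\KL_2$ has Swan conductor bounded in terms of $\alpha$, and tensoring with the rank~$1$ sheaf $\mcL_{\psi(\gamma y)}$ adds at most $\mathrm{rank}(\mcG)=\alpha+1$ more. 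The main subtlety, and essentially the only non-routine point, is this uniform Swan-conductor bound at $\infty$ when the breaks of the two tensor factors can coincide; since the rank of $\mcG$ is bounded in terms of $\alpha$ alone, however, so is the total Swan conductor, giving $\dim H^1_c=O_\alpha(1)$ and the claimed estimate.
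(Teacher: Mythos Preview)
Your proof is correct and follows essentially the same approach as the paper's. The paper invokes a packaged ``quasi-orthogonality'' lemma from~\cite{counting} (applied to the pair $\mcF_1=[2]^*\symk^{\alpha}\KL_2$ and $\mcF_2=\mcL_{\psi(-\gamma y)}$), whereas you unpack that lemma into its constituents---Grothendieck--Lefschetz, vanishing of $H^0_c$ and $H^2_c$ via geometric irreducibility (using that the connected group $\SL_2$ has no closed index-$2$ subgroup, so the monodromy survives the pullback by~$[2]$), Deligne's weight bound on $H^1_c$, and a Swan-conductor estimate via Grothendieck--Ogg--Shafarevich; the inputs and logic are the same.
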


\begin{proof}
  This is a simple special case of the ``quasi-orthogonality''
  interpretation of Deligne's general form of the Riemann Hypothesis
  over finite fields. For instance, we can use the statement
  in~\cite{counting}*{Lemma\,3.5\,(2)} with the following data:
  \begin{itemize}
  \item the field~$k$ is~$\Ff_p$, the curve~$X$ is the
    multiplicative group and~$U=X$;
  \item the sheaf~$\mcF_1$ is the  $\alpha$-th
    symmetric power of the pullback by $y\mapsto y^2$ of the
    Kloosterman sheaf of rank~$2$, so that its trace function is given by
    \[
      t_{\mcF_1,\Ff_p}(y)=
      U_{\alpha}\Bigl(\frac{K(y^2,1;p)}{\sqrt{p}}\Bigr)
    \]
    for all $y\in U(\Ff_p)=\Ff_p^{\times}$;
  \item the sheaf~$\mcF_2$ is the restriction to $U$ of the Artin--Schreier
    sheaf $\mathcal{L}_{\psi_p(-\gamma y)}$, with
    \[
      \overline{t_{\mcF_2,\Ff_p}(y)}= e\Bigl(\frac{\gamma y}{p}\Bigr) \quad \text{for} \quad y\in\Ff_p^{\times}.
    \]
  \item the constant~$c$ is an upper-bound for the conductors of
    $\mcF_1$ and~$\mcF_2$, which can be taken to be a suitable
    constant depending only on~$\alpha$ (and not on~$p$) by formal
    properties of the conductor (although this can be done more
    elementarily in this case, it is maybe most convenient here to
    apply~\cite{qst}*{Prop.\,6.33} and~\cite{qst}*{Prop.\,7.5},
    respectively, taking into account~\cite{qst}*{Cor.\,7.4} to
    compare the notions of conductors from~\cite{counting}
    and~\cite{qst}).
  \end{itemize}

  Katz's study of the Kloosterman sheaf (cf.~\cite{gkm}*{Th.\,4.1.1})
  implies that~$\mcF_1$ is lisse, geometrically irreducible of
  rank~$\alpha+1$ and pure of weight~$0$; the sheaf~$\mcF_2$ is lisse,
  geometrically irreducible of rank~$1$ and pure of weight~$0$ (for
  essentially tautological reasons). Since~$\alpha+1\geq 2$, the two
  sheaves cannot be geometrically isomorphic, hence by loc. cit., we
  obtain the result (the key point being that the bound~$c$ for the
  conductors is independent of~$p$).
\end{proof}

\bibliographystyle{Nabbrv}
\bibliography{shortgraphs}

\end{document}